\documentclass[12pt]{amsart}
\usepackage{amscd}
\usepackage{amssymb}
\usepackage{a4wide}
\usepackage{amstext}
\usepackage{amsthm}
\usepackage{mathrsfs}

\usepackage{tikz}
\DeclareGraphicsExtensions{.pdf,.png,.jpg}
\usetikzlibrary{graphs}
\usetikzlibrary{arrows,positioning,calc}

\usepackage[final]{hyperref}
\hypersetup{unicode= false, colorlinks=true, linkcolor=blue,
anchorcolor=blue, citecolor=green, filecolor=red, menucolor=blue,
pagecolor=blue, urlcolor=blue} \linespread{1.2}
\numberwithin{equation}{section}

\newcommand{\abs}[1]{| #1 |}

\renewcommand{\phi}{\varphi}

\newcommand{\co}{\mathbb{C}}

\newcommand{\T}{\mathbb{T}}
\newcommand{\D}{\mathbb{D}}

\newtheorem{Thm}{Theorem}[section]
\newtheorem{theorem}[Thm]{Theorem}
\newtheorem{lemma}[Thm]{Lemma}
\newtheorem{proposition}[Thm]{Proposition}

\newtheorem{remark}[Thm]{Remark}

\textwidth=16.5cm
\oddsidemargin=0cm
\textheight=590pt

\begin{document}
\sloppy

\title{Bernstein-type inequalities for mean $n$-valent functions}

\author{Anton~Baranov}
\address{Department of Mathematics and Mechanics, St. Petersburg State University, 28,
Universitetskii prosp., 198504 Staryi Petergof, Russia}
\email{anton.d.baranov@gmail.com} 
\author{Ilgiz~Kayumov}
\address{Department of Mathematics and Computer Science, St. Petersburg State University,
Russia, 14 line of the VO, 29B, 199178 St. Petersburg, Russia}
\address{Kazan Federal University, 420000, Kazan, Russia}
\email{ikayumov@gmail.com}
\author{Rachid Zarouf}
\address{Aix-Marseille University, Laboratoire ADEF, Campus Universitaire de Saint-
J\'erome, 52 Avenue Escadrille Normandie Niemen, 13013 Marseille, France}
\address{CPT, Aix-Marseille Universit\'e, Universit\'e de Toulon, Marseille, France}
\email{rachid.zarouf@univ-amu.fr}
\begin{abstract}
We derive new integral estimates  of the derivatives of mean $n$-valent functions in the unit disk. Our results develop and complement estimates obtained by  E.\,P.~Dolzhenko and A.\,A.~Pekarskii, as well as recent inequalities obtained by the authors. As an application, we improve
some inverse theorems of rational approximation due to Dolzhenko.
\end{abstract}

\thanks{The work of I.R. Kayumov in Sections 2, 3 and 6 is supported by the Russian Science
Foundation grant No. 23-11-00066. The work of A.D. Baranov is supported by 
Ministry of Science and Higher Education of the Russian Federation under agreement No. 075-15-2024-631.}

\maketitle

\section{Introduction}

Estimates of the norms of the derivatives of rational functions in various spaces of analytic functions
are among classical problems of function theory. Of special interest are estimates depending only 
on the degree of a rational function but not on the localization of its zeros. Many deep estimates were obtained,
e.g., by E.\,P.~Dolzhenko \cite{Dol}, V.\,V.~Peller \cite{pel}, A.\,A.~Pekarskii \cite{pek}, V.\,I.~Danchenko 
\cite{dan0, dan}, E.~Dynkin \cite{dyn1, dyn2} and many other authors. 

Since rational functions of degree $n$ are, in particular, $n$-valent functions, it is a natural problem
to extend known inequalities to this, more general, class of functions. The following inequalities were obtained
recently in \cite{BaKa1} (the symbol ${\rm d}\mathcal{A}$ denotes the standard area measure
and $A^p(G)$ stands for the usual Bergman space in $G$).

\begin{theorem}
\label{aaa}
Let $G$ be a bounded domain in $\mathbb{C}$  with a rectifiable boundary. Then for any $n$-valent bounded function
$f$ in $G$ one has 
\begin{equation}
\label{domp}
\|f'\|_{A^p(G)}^p = \int_{G} |f'(w)|^p\, {\rm d}\mathcal{A}(w) \leq C n^{p-1} \|f\|^p_{H^\infty(G)}, \qquad 1<p\le 2, 
\end{equation} 
\begin{equation}
\label{domp1}
\|f'\|_{A^1(G)} = \int_{G} |f'(w)|\, {\rm d}\mathcal{A}(w) \leq C \sqrt{\log (n+1)} \, \|f\|_{H^\infty(G)}.
\end{equation}
Here $\|f\|_{H^\infty(G)} = \sup_{w\in G} |f(w)|$ and the constants depend only on $G$ and $p$, but not on $f$.
\end{theorem}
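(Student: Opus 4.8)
The plan is to prove \eqref{domp} for $p=2$ and for $1<p<2$ by feeding two distributional estimates of $|f'|$ into the layer‑cake formula, and then to deduce \eqref{domp1} from a sharpened form of \eqref{domp} as $p\downarrow 1$. Set $M=\|f\|_{H^\infty(G)}$ and let $\nu(w)$ be the number of $z\in G$ with $f(z)=w$, counted with multiplicity. The backbone is the change of variables (area) formula
$$\int_G |f'(w)|^2\,\mathrm{d}\mathcal{A}(w)=\int_{\mathbb{C}}\nu(w)\,\mathrm{d}\mathcal{A}(w).$$
Since $\nu$ vanishes off $f(G)\subseteq\{|w|\le M\}$, the mean $n$-valence of $f$ bounds the right‑hand side by $\pi n M^2$. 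This is precisely \eqref{domp} at $p=2$, and by Chebyshev it yields the tail estimate $\mu(\lambda):=\mathcal{A}(\{z\in G:|f'(z)|>\lambda\})\le \pi n M^2/\lambda^2$.

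The second estimate uses boundedness and the geometry of $G$ only, not the valence. If $\dist(z,\partial G)=\delta$, then $D(z,\delta)\subset G$ and the Cauchy estimate gives $|f'(z)|\le M/\delta$; hence $\{|f'|>\lambda\}$ is contained in the collar $\{z\in G:\dist(z,\partial G)<M/\lambda\}$. As $\partial G$ is rectifiable, this collar has area at most $C_G\,M/\lambda$ (cover $\partial G$ by $O(\lambda/M)$ disks of radius $M/\lambda$), so $\mu(\lambda)\le C_G\,M/\lambda$, a bound independent of $n$.

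For $1<p<2$ I would insert both bounds into $\int_G|f'|^p\,\mathrm{d}\mathcal{A}=p\int_0^\infty\lambda^{p-1}\mu(\lambda)\,d\lambda$, splitting at $\lambda_0=nM$. On $(0,nM)$ the collar bound gives $p\int_0^{nM}C_G M\,\lambda^{p-2}\,d\lambda=\frac{pC_G}{p-1}\,n^{p-1}M^{p}$, which converges at $0$ precisely because $p>1$; on $(nM,\infty)$ the Chebyshev tail gives $p\int_{nM}^\infty \pi n M^2\,\lambda^{p-3}\,d\lambda=\frac{\pi p}{2-p}\,n^{p-1}M^{p}$, which converges because $p<2$. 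The two power laws meet at $\lambda_0=nM$, and summing them proves \eqref{domp} with a constant of size $(p-1)^{-1}+(2-p)^{-1}$; the blow‑ups as $p\to1^{+}$ and $p\to2^{-}$ signal that the endpoints need separate treatment, $p=2$ being already handled above.

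Finally I would obtain \eqref{domp1} from \eqref{domp} by Hölder's inequality, $\int_G|f'|\,\mathrm{d}\mathcal{A}\le\big(\int_G|f'|^p\,\mathrm{d}\mathcal{A}\big)^{1/p}\mathcal{A}(G)^{1-1/p}$, letting $p=1+\eps$ and optimizing over $\eps$; the minimum lands near $\eps\sim(\log n)^{-1}$. Here lies the main obstacle: with the constant $C_p\sim(p-1)^{-1}$ produced above this optimization yields only $M\log n$, whereas \eqref{domp1} asserts $M\sqrt{\log n}$. The square root emerges if and only if the constant in \eqref{domp} is improved to $C_p\sim(p-1)^{-1/2}$; equivalently, the collar bound $\mu(\lambda)\le C_G M/\lambda$ must be sharpened in the range $\lambda\lesssim nM$ by genuinely exploiting the $n$-valence rather than only the energy $\int_G|f'|^2\le\pi nM^2$. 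Obtaining this refined distributional estimate—whose $\sqrt{\log}$ gain is of John–Nirenberg type—is where the real work sits; a natural route is the coarea reformulation $\int_G|f'|\,\mathrm{d}\mathcal{A}=\int_{-M}^{M}\mathcal{H}^{1}(\{\re f=t\}\cap G)\,dt$, which reduces \eqref{domp1} to a bound of order $\sqrt{\log n}$ for the length of almost every level curve of $\re f$.
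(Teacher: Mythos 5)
First, a point of reference: the present paper does not prove Theorem \ref{aaa} at all --- it is quoted from \cite{BaKa1} --- so the only internal comparison available is with the machinery the paper deploys for its close relatives (Lemma \ref{lem1}, Hayman's inequality \eqref{hain}, Theorem \ref{th1}). Your proof of \eqref{domp} is correct and self-contained: the area formula $\int_G|f'|^2\,{\rm d}\mathcal{A}=\int_{\CC}\nu(w)\,{\rm d}\mathcal{A}(w)\le \pi nM^2$ settles $p=2$; the Cauchy estimate plus the collar bound give $\mu(\lambda)\le C_GM/\lambda$; and the layer-cake splitting at $\lambda_0=nM$ handles $1<p<2$ with constant $\asymp (p-1)^{-1}+(2-p)^{-1}$. (One point to make precise: the estimate $\mathcal{A}(\{\dist(\cdot,\partial G)<\eps\})\le C_G\eps$ needs $\partial G$ to be a finite union of rectifiable curves, not merely a compact set of finite $\mathcal{H}^1$-measure, since finite Hausdorff measure alone does not control Minkowski content; this is the standard reading of ``rectifiable boundary'' and is harmless.)

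The genuine gap is \eqref{domp1}, and you identify it yourself without closing it. H\"older plus optimization in $p=1+\eps$ with the constant $C_p\asymp(p-1)^{-1}$ that your argument produces yields only $C\log n\,\|f\|_{H^\infty(G)}$ --- which is Dolzhenko's original bound \cite{Dol}, i.e.\ exactly what Theorem \ref{aaa} is an improvement of --- and neither the refined distributional estimate nor the constant $C_p\asymp(p-1)^{-1/2}$ is established; the coarea reformulation via lengths of level sets of $\re f$ is offered as a ``natural route'' but not carried out, and it is not at all clear how to show that almost every level curve has length $O(\sqrt{\log n})$. The mechanism that actually produces $\sqrt{\log n}$ in this circle of results is different: away from the boundary one applies Cauchy--Schwarz in the radial variable against the Littlewood--Paley square function, $\int_{|z|<1-\delta}|f'|\,{\rm d}\mathcal{A}\le\int_{\T}S(f)\,\bigl(\int_0^{1-\delta}\tfrac{{\rm d}r}{1-r}\bigr)^{1/2}{\rm d}t\lesssim\sqrt{\log(1/\delta)}\,\|f\|_{H^1}$ (this is Lemma \ref{lem1} with $g\equiv1$; the square root of the logarithm comes from the inner radial integral, not from a distribution-function estimate), while on the remaining collar of width $\delta=n^{-K}$ one uses the $n$-valence through Hayman's inequality \eqref{hain} with $\lambda=1$ to get $\int_0^{2\pi}|f'(re^{it})|\,{\rm d}t\lesssim n^{1/2}(1-r)^{-1/2}\|f\|_{H^\infty}$ and integrates in $r$; passing to a general domain with rectifiable boundary then requires a conformal map $\phi$ with $\phi'\in H^1$. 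Without some input of this kind, \eqref{domp1} remains unproved in your write-up.
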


Theorem \ref{aaa} generalizes inequalities proved by Dolzhenko \cite{Dol} for rational functions and under strong 
regularity assumptions (including $C^2$-smoothness) on the boundary of the domain. Moreover, in the case of $p=1$ a substantial improvement
for the growth order was obtained (in \cite{Dol} the estimate was proved with $\log n$ on the right). Inequalities 
\eqref{domp} and \eqref{domp1} are sharp already
for the case of the unit disc. For $p>1$ the trivial example is given by $f(z) = z^n$. The sharpness of the growth 
$\sqrt{\log n}$ is attained on polynomials or Blaschke products of degree $n$, but the examples are complicated and are based
on an ingenious construction of Bloch functions with prescribed growth due to R.~Ba\~nuelos and C.\,N.~Moore \cite{BanMoo} 
(see \cite{BaKa} for details).

In the case of rational functions in the unit disc more general results were obtained by Pekarskii \cite{pek} who showed 
that the $H^\infty$ norm on the right can be replaced by a certain $H^p$ norm. We formulate a part of 
\cite[Theorem 2.1]{pek}.

\begin{theorem}
Let $p>1$, $\alpha>0$ and $\sigma = \frac{p}{\alpha p+1}$. Then for any rational function $f$ of degree at most $n$ with poles
in $\{|z|>1\}$ one has
\begin{equation}
\label{lll}
\|f\|^\sigma_{B_\sigma^\alpha} = 
\int_\D |f'(z)|^\sigma (1-|z|^2)^{(1-\alpha)\sigma -1} {\rm d}\mathcal{A}(z) \le Cn^{\alpha \sigma }\|f\|^\sigma_{H^p}.
\end{equation}
Here the constant $C$ depends only on the parameters $p, \alpha$ and $H^p$ denotes the standard Hardy space in $\D$.
\end{theorem}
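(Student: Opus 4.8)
The plan is to reduce the weighted area estimate to a boundary estimate in which the degree $n$ enters only through the elementary identity $\int_\T |B'|\,dm = n$, where $B$ is the finite Blaschke product of degree $n$ whose zeros are the reflections $1/\overline{a_j}$ of the poles $a_1,\dots,a_n$ of $f$, and $dm$ denotes normalized arclength measure on $\T$. The key intermediate inequality I would aim for is
\[
\int_\D |f'(z)|^\sigma (1-|z|^2)^{(1-\alpha)\sigma-1}\,{\rm d}\mathcal A(z) \;\le\; C \int_\T |f(\zeta)|^\sigma\,|B'(\zeta)|^{\alpha\sigma}\,dm(\zeta) \qquad (\star).
\]
A direct computation for a single Blaschke factor (that is, $f(z)=c/(1-\overline b z)$ with $n=1$) shows that both sides of $(\star)$ scale like $|c|^\sigma(1-|b|^2)^{1-(1+\alpha)\sigma}$; this is reassuring and singles out the power $\alpha\sigma$ on $|B'|$ as the only admissible one.

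Granting $(\star)$, the theorem follows by one application of Hölder's inequality with the conjugate exponents $p/\sigma$ and $r=p/(p-\sigma)$:
\[
\int_\T |f|^\sigma |B'|^{\alpha\sigma}\,dm \;\le\; \Big(\int_\T |f|^p\,dm\Big)^{\sigma/p}\Big(\int_\T |B'|^{\alpha\sigma r}\,dm\Big)^{1/r}.
\]
The defining relation $\sigma=p/(\alpha p+1)$ is precisely the identity $\tfrac1\sigma=\alpha+\tfrac1p$, from which one computes $\alpha\sigma r=1$ and $1/r=\alpha\sigma$; hence the second factor equals $\big(\int_\T|B'|\,dm\big)^{\alpha\sigma}=n^{\alpha\sigma}$ while the first factor is $\|f\|_{H^p}^\sigma$. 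This yields exactly $\|f\|_{B_\sigma^\alpha}^\sigma\le C\,n^{\alpha\sigma}\,\|f\|_{H^p}^\sigma$.

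It remains to prove $(\star)$, and this is where I expect the real difficulty to lie. My approach would use that a rational function of degree at most $n$ with poles in $\{|z|>1\}$ belongs to the model space $K_B=H^2\ominus BH^2$, so that it is reproduced by the kernel $k_z(\zeta)=\frac{1-\overline{B(z)}B(\zeta)}{1-\overline z\zeta}$. Differentiating the reproducing formula in $z$ gives the pointwise bound
\[
|f'(z)|\;\le\;\int_\T |f(\zeta)|\,\Big| \partial_z\frac{1-B(z)\overline{B(\zeta)}}{1-z\overline\zeta}\Big|\,dm(\zeta),
\]
and the task becomes to substitute this into the left-hand side of $(\star)$, integrate out $z$ by a Schur-type (Fubini) argument, and show that the resulting $\zeta$-kernel is dominated by $C\,|B'(\zeta)|^{\alpha\sigma}$. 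The two structural facts I would rely on are that $|B(\zeta)|=1$ on $\T$, which forces the differentiated kernel to concentrate at the Blaschke scale, and the Schwarz--Pick inequality $(1-|z|^2)|B'(z)|\le 1-|B(z)|^2$, which controls the weight $(1-|z|^2)^{(1-\alpha)\sigma-1}$ against powers of $1-|B(z)|^2$.

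The main obstacles are then twofold: (i) obtaining the kernel estimates \emph{uniformly} over all admissible pole configurations, so that the constant $C$ depends only on $p$ and $\alpha$; and (ii) passing the power $\sigma$ back inside the integral, which requires Jensen's inequality when $\sigma\le1$ and Hölder's inequality when $\sigma>1$ — both regimes occurring depending on whether $p(1-\alpha)\gtrless 1$. For $p=2$ the $L^2$ geometry of $K_B$ makes the kernel bounds transparent, but for general $p\ne2$ this geometry is unavailable; there I would instead route $(\star)$ through the $H^p$ theory of Hankel operators and Besov spaces, in the spirit of the work of Peller and Pekarskii, and this operator-theoretic input is the technically heaviest ingredient of the argument.
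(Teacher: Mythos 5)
First, a point of reference: the paper does not prove this statement at all --- it is quoted verbatim as part of Pekarskii's Theorem~2.1 from \cite{pek} and used as a black box --- so there is no in-paper argument to measure yours against. The relevant benchmarks are Pekarskii's original proof (via Hardy--Besov machinery and special decompositions of rational functions) and the later proofs through pseudoanalytic continuation and model spaces due to Dyn'kin \cite{dyn1} and Baranov--Zarouf \cite{BaZa}, which are indeed in the spirit of what you sketch.

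That said, your proposal is not a proof: everything rests on the inequality $(\star)$,
\[
\int_\D |f'(z)|^\sigma(1-|z|^2)^{(1-\alpha)\sigma-1}\,{\rm d}\mathcal{A}(z) \le C\int_\T |f(\zeta)|^\sigma\,|B'(\zeta)|^{\alpha\sigma}\,dm(\zeta),
\]
and $(\star)$ is never established. The two steps you do carry out are correct but carry no real content: the single-factor scaling check only shows that $(\star)$ is dimensionally consistent, and the concluding H\"older step (where $\sigma=p/(\alpha p+1)$ gives $\alpha\sigma r=1$ and $\int_\T|B'|\,dm=n$) is a clean observation, but all of the $n$-dependence it extracts is already encoded in $(\star)$. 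The sketched route to $(\star)$ --- differentiate the reproducing kernel of $K_B$, take absolute values, and run a Schur/Fubini argument --- is exactly where the difficulty of the theorem lives, and you acknowledge as much. In particular, putting absolute values inside the differentiated reproducing formula destroys the cancellation coming from $|B|=1$ on $\T$, and it is not clear that the resulting positive kernel is Schur-bounded against the weight $(1-|z|^2)^{(1-\alpha)\sigma-1}$ uniformly over all pole configurations; moreover, for $\sigma\le 1$ the step of ``passing the power $\sigma$ inside the integral'' cannot be done by Jensen unless the kernel has uniformly bounded mass, which is again the unproved point. So what you have is a plausible programme (one that genuinely resembles Dyn'kin's approach) rather than a proof; to complete it you would either have to prove $(\star)$ with uniform constants or cite it from the literature, at which point the theorem itself might as well be cited, as the paper does.
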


Note that on the left we have a Besov $B_\sigma^\alpha$ seminorm. Non-weighted Bergman norm of the derivative
corresponds to the case $\alpha = \frac{\sigma-1}{\sigma}$. In this case one has
$$
\int_\D |f'(z)|^\sigma  {\rm d}\mathcal{A}(z) \le Cn^{\sigma-1 }\|f\|^\sigma_{H^p}, \qquad p=\frac{\sigma}{2-\sigma}, \ 
\sigma <2.
$$

Thus, Pekarskii's theorem does not cover the case of the usual mean of the derivative $\int_\D |f'(z)|{\rm d}\mathcal{A}(z)$. 
The aim of this note is to obtain an estimate for this mean not only for rational functions, but also for mean $n$-valent functions. 
Recall that a function $f$ analytic in a domain $G$ is said to be $n$-valent in $G$ if for any $w\in\co$ the equation
$f(z) =w$ has at most $n$ solutions in $G$. Denote by $n(w)$ the number of solutions of this equation in $G$ (counting multiplicities). A function $f$ is said to be mean $n$-valent in $G$ if  for any $R>0$
$$
\frac{1}{\pi}\int_0^R\int_0^{2\pi} n(re^{it}) {\rm d}\phi \,r {\rm d}r \le n R^2.
$$
In this definition $n$ can be an arbitrary positive number, not necessarily an integer.
For the definition and theory of mean $n$-valent functions see \cite{hay}.

The following is our main result.

\begin{theorem}
\label{th1}
For any $p>1$ there exists a constant $C=C(p)>0$ such that, for any mean $n$-valent in $\D$ function $f\in H^p$, 
$$
\int_\D |f'(z)|{\rm d}\mathcal{A}(z) \le C \sqrt{\log(n+1)} \|f\|_{H^p}.
$$
\end{theorem}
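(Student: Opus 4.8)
The plan is to reduce the $H^p$ estimate to the bounded case \eqref{domp1} by decomposing $\D$ according to the size of $|f|$. Two elementary inputs drive the argument. First, the area formula together with mean $n$-valence gives, for every $t>0$,
\[
\int_{\{z\in\D:\,|f(z)|<t\}}|f'(z)|^2\,{\rm d}\mathcal{A}(z)=\int_{|w|<t}n(w)\,{\rm d}\mathcal{A}(w)\le \pi n t^2,
\]
since every $w$ with $|w|<t$ has all of its $f$-preimages inside $\{|f|<t\}$. Second, for $f\in H^p$ one has the Hardy-space area embedding $\int_\D|f|^q\,{\rm d}\mathcal{A}\le C(p,q)\|f\|_{H^p}^q$ for every $q<2p$, whence, by Chebyshev, $\area\{|f|>t\}\le C\|f\|_{H^p}^q\,t^{-q}$. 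Because $p>1$ we may and do fix an exponent $q$ with $2<q<2p$; this is the only place where the hypothesis $p>1$ enters.

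With $t_j=2^j\|f\|_{H^p}$ introduce the dyadic value-shells $E_0=\{|f|<t_0\}$ and $E_j=\{t_{j-1}\le|f|<t_j\}$ for $j\ge1$, so that $\D=\bigcup_{j\ge0}E_j$ up to a null set and $\area(E_j)\le C\|f\|_{H^p}^q t_{j-1}^{-q}\le C'2^{-jq}$ for $j\ge1$, while $\area(E_0)\le\pi$. The heart of the matter is the shell estimate
\[
\int_{E_j}|f'(z)|\,{\rm d}\mathcal{A}(z)\le C\sqrt{\log(n+1)}\,t_j\,\big(\area(E_j)\big)^{1/2},\qquad j\ge0.
\]
For $j=0$, where $|f|<\|f\|_{H^p}$, this is exactly \eqref{domp1} applied on the bounded core (with $\area(E_0)^{1/2}\le\sqrt\pi$ absorbed into $C$); for $j\ge1$ it is the essential new input, and it improves the trivial Cauchy--Schwarz bound — which on $E_j$ would only yield the factor $\sqrt n$ in place of $\sqrt{\log(n+1)}$ — precisely as \eqref{domp1} improves upon the $\sqrt n$ bound in the bounded case. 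Granting this estimate and inserting $\area(E_j)\le C'2^{-jq}$ and $t_j=2^j\|f\|_{H^p}$, the shell contributions are dominated by $C\sqrt{\log(n+1)}\,\|f\|_{H^p}\,2^{j(1-q/2)}$; since $q>2$ the geometric series converges, and summation over $j\ge0$ produces the asserted inequality.

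The main obstacle is precisely the shell estimate displayed above, i.e. transferring the global logarithmic gain of \eqref{domp1} to the thin annular regions $E_j$ while retaining their small area. A direct application of Theorem \ref{aaa} to $E_j$, regarded as a domain, is unsatisfactory on two counts: its constant depends on the (generally irregular) geometry of the level curve $\{|f|=t_j\}$, and it does not register the smallness of $\area(E_j)$, whereas a crude Cauchy--Schwarz on $E_j$ reinstates the factor $\sqrt n$. The resolution I would pursue is to revisit the mechanism behind \eqref{domp1} rather than to use it as a black box, running it on each sublevel set $\{|f|<t_j\}$ with the area-sensitive normalization built in, so that the $\sqrt{\log(n+1)}$ growth survives while the $(\area)^{1/2}$ factor is produced. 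The mean-valence identity of the first paragraph, now applied over the annular value-range $\{t_{j-1}\le|w|<t_j\}$, is what feeds the required $L^2$ control into that mechanism; making this quantitative, with constants independent of $n$ and of the shell, is where essentially all of the difficulty lies.
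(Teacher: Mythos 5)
There is a genuine gap: the entire argument rests on the ``shell estimate''
\[
\int_{E_j}|f'(z)|\,{\rm d}\mathcal{A}(z)\le C\sqrt{\log(n+1)}\,t_j\,\big(\area(E_j)\big)^{1/2},
\]
which you do not prove and, as you yourself say, is where ``essentially all of the difficulty lies.'' What you can actually extract from the mean-valence identity is the Cauchy--Schwarz bound with $\sqrt{n}$ in place of $\sqrt{\log(n+1)}$, and no mechanism is offered for upgrading it; applying \eqref{domp1} to $E_j$ (or even to $E_0$) is not legitimate, since Theorem \ref{aaa} requires $f$ to be bounded on all of the domain $G$ and its constant depends on the geometry of $G$, whereas the sublevel sets $\{|f|<t_j\}$ are uncontrolled open sets. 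So what you have is a reduction of the theorem to a stronger, localized statement that is nowhere established and whose truth is far from clear: the logarithmic gain in \eqref{domp1} is a global phenomenon (its sharpness already requires the delicate Ba\~nuelos--Moore construction), and there is no evident reason it should localize to thin value-shells while also producing the extra factor $(\area(E_j))^{1/2}$.

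For comparison, the paper's proof avoids level sets entirely and splits the disk radially at $|z|=1-1/n^K$. On the inner disk one applies Cauchy--Schwarz in the radial variable and the Littlewood--Paley square-function estimate $\|S(f)\|_{L^1}\lesssim\|f\|_{H^1}$, which is where the factor $\sqrt{\log(n+1)}$ comes from (via $\int_0^{1-1/n^K}(1-r)^{-1}\,{\rm d}r\asymp K\log n$). On the thin outer annulus one uses Hayman's inequality \eqref{hain} for mean $n$-valent functions (Lemma \ref{lem2}) to get $\int_0^{2\pi}|f'(re^{it})|\,{\rm d}t\lesssim n^{1/2}(1-r)^{-1/p}\|f\|_{H^p}$, and since $p>1$ the integral over $1-1/n^K<r<1$ is $O(n^{1/2-K(1-1/p)})$, which is negligible for $K$ large. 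If you want to salvage your approach, you would need to supply a proof of the shell estimate with constants uniform in $j$ and $n$; as written, the proposal is a programme rather than a proof.
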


In Section \ref{poj} we prove a slightly more general statement for mean $n$-valent functions 
in sufficiently regular domains.

The case $p=1$ is not covered by the methods of this note. As far as we know, the only known result for the case $p=1$
is an inequality due to Ch.~Pommerenke \cite{pom62} which gives a much faster growth. Namely,  it is shown
in \cite{pom62} that for any mean $n$-valent function $f$ such that $f\in H^p$, $p<2$, one has
$$
\int_0^1 \bigg( \int_0^{2\pi} |f'(re^{it})| {\rm d}\phi \bigg)^p {\rm d}r \le C(p) n^{p/2} \|f\|^p_{H^p}.
$$
For $p=1$ this gives the growth of order $\sqrt{n}$ in place of $\sqrt{\log n}$.
It is an interesting problem to find sharp order of the means of the derivatives for $f\in H^1$.

Another interesting question is to obtain an analog of Pekarskii's inequalities for $n$-valent or mean $n$-valent functions. 
Applying our methods to this question gives only partial results which we believe can be improved; however we include them as an illustration of the method. For sufficiently large $p$ the inequality differs from \eqref{lll} only by a logarithmic factor. 

\begin{theorem}
\label{th2}
Let $p>1$, $0<\alpha<1/2$ and $\sigma<2$. If $p\ge \frac{2\sigma}{2-\sigma}$, then,
for any mean $n$-valent in $\D$ function $f\in H^p$, 
\begin{equation}
\label{gott1}
\|f\|^\sigma_{B_\sigma^\alpha} \le C(p, \alpha, \sigma) n^{\alpha\sigma} (\log n)^{\sigma/2}\|f\|^\sigma_{H^p}.
\end{equation}
If $p< \frac{2\sigma}{2-\sigma}$ and $\sigma < \frac{p}{\alpha p+1}$, then
\begin{equation}
\label{gott2}
\|f\|^\sigma_{B_\sigma^\alpha} \le C(p, \alpha, \sigma)  n^{\alpha\sigma +\sigma/p +\sigma/2-1} \|f\|^\sigma_{H^p}.
\end{equation}
\end{theorem}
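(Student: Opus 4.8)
The plan is to reduce the weighted integral to its radial means and to interpolate between two estimates of different nature: the area (mean-valence) estimate, which carries the powers of $n$, and an $H^p$-estimate of Littlewood--Paley type, which carries the logarithmic factor and of which Theorem \ref{th1} is the endpoint $\sigma=1$, $\alpha=0$. Setting $J_s(r)=\int_0^{2\pi}|f'(re^{it})|^s\,{\rm d}\phi$, one has
$$\|f\|_{B_\sigma^\alpha}^\sigma=\int_0^1 J_\sigma(r)\,(1-r^2)^{(1-\alpha)\sigma-1}\,r\,{\rm d}r,$$
and since $\sigma<2$ we may bound $J_\sigma(r)\le (2\pi)^{1-\sigma/2}J_2(r)^{\sigma/2}$ (for $\sigma\ge1$ it is more efficient to interpolate $J_\sigma\le J_1^{2-\sigma}J_2^{\sigma-1}$ on each circle). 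The two basic inputs are: (a) the area estimate coming directly from mean $n$-valence, namely $\int_{|z|<r}|f'|^2\,{\rm d}\mathcal{A}=\int_{\CC}n_r(w)\,{\rm d}\mathcal{A}(w)\le \pi n\,M_\infty(r,f)^2$ where $n_r$ counts preimages in $\{|z|<r\}$, combined with the growth bound $M_\infty(r,f)\le C(1-r)^{-1/p}\|f\|_{H^p}$; and (b) the Littlewood--Paley identity $\int_\D|f'|^2|f|^{p-2}(1-|z|^2)\,{\rm d}\mathcal{A}\asymp\|f\|_{H^p}^p$ together with the elementary bound $\int_0^{2\pi}|f(re^{it})|^q\,{\rm d}\phi\le 2\pi\|f\|_{H^p}^q$ for $q\le p$.

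First I would apply Hölder in $z$ with exponents $2/\sigma$ and $2/(2-\sigma)$, splitting $|f'|^\sigma$ so that the factor of exponent $2/\sigma$ reconstructs the density $|f'|^2|f|^{p-2}(1-|z|^2)$; by (b) this factor contributes exactly $\|f\|_{H^p}^{p\sigma/2}$, and the complementary factor is a pure weight integral in $|f|$ and $(1-|z|^2)$, estimated through the integral means in (b). Because $0<\alpha<1/2$, this weight integral is super-critical at the boundary, so it must be truncated at the scale $1-|z|\sim 1/n$: the bulk $\{|z|<1-1/n\}$ then produces the factor $n^{\alpha\sigma}$ together with the logarithmic factor inherited from the endpoint estimate, which is the source of the $(\log n)^{\sigma/2}$ in \eqref{gott1}. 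The boundary layer $\{1-1/n\le|z|<1\}$ of width $1/n$ is handled instead by the area estimate (a), via Hölder on the circle and $\int_{|z|<r}|f'|^2\,{\rm d}\mathcal{A}\le \pi n M_\infty(r,f)^2$. The comparison of these two regions is governed by the sign of $\tfrac1\sigma-\tfrac1p-\tfrac12$: when $p\ge\frac{2\sigma}{2-\sigma}$, i.e. $\frac1\sigma\ge\frac1p+\frac12$, the bulk dominates and one obtains \eqref{gott1}; when $p<\frac{2\sigma}{2-\sigma}$ the boundary layer dominates, no logarithm survives, and the extra exponent $\sigma/p+\sigma/2-1$ of \eqref{gott2} appears.

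The hard part is to make the power of $n$ sharp rather than merely finite. The crude pointwise consequence $J_2(r)\lesssim n(1-r)^{-1-2/p}\|f\|_{H^p}^2$ of (a) loses a factor on the weight scale and is by itself too weak to reach the exponent $n^{\alpha\sigma}$ of \eqref{gott1}; the mean-valence estimate is efficient only at scales $1-|z|\lesssim 1/n$, so the bulk must be treated through the Littlewood--Paley identity (b) and \emph{not} through (a). Balancing the two regions at the crossover scale $1-|z|\sim 1/n$, and verifying that the borderline weight integral contributes precisely $n^{\alpha\sigma}$ (and, in the regime of \eqref{gott1}, the logarithm to the exact power $\sigma/2$), is the main technical obstacle. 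The side conditions $\alpha<1/2$ and $\sigma<\frac{p}{\alpha p+1}$ enter exactly here: they guarantee the convergence of the relevant weight integrals and the admissibility of the Hölder exponents used to split $|f'|^\sigma$.
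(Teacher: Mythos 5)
Your overall architecture (a Littlewood--Paley estimate for the bulk $\{|z|<1-1/n\}$, a valence-based estimate for the boundary layer, and the dichotomy $p\gtrless\frac{2\sigma}{2-\sigma}$) matches the paper's, and your bulk treatment is essentially Lemma \ref{lem1-0}. But there is a genuine gap in the boundary layer, exactly at the step you yourself flag as ``the main technical obstacle'': the tool you propose there, the area estimate $\int_{|z|<r}|f'|^2\,{\rm d}\mathcal{A}\le\pi n\,M_\infty(r,f)^2$, is quantitatively too weak to reach either \eqref{gott1} or \eqref{gott2}, and no choice of crossover scale repairs this. Concretely, on dyadic annuli $A_k=\{1-2^{-k}\le|z|<1-2^{-k-1}\}$ with $2^{-k}\le 1/n$, H\"older with exponents $2/\sigma$ and $2/(2-\sigma)$ against $|A_k|\asymp 2^{-k}$ gives
$$
\int_{A_k}|f'|^\sigma(1-|z|^2)^{(1-\alpha)\sigma-1}\,{\rm d}\mathcal{A}\;\lesssim\; n^{\sigma/2}\,2^{k(\alpha\sigma+\sigma/p-\sigma/2)}\,\|f\|_{H^p}^\sigma,
$$
and summing over $k\ge\log_2 n$ (which already requires $\alpha+1/p<1/2$) yields $n^{\alpha\sigma+\sigma/p}\|f\|_{H^p}^\sigma$. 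This exceeds \eqref{gott1} by the power $n^{\sigma/p}$ and \eqref{gott2} by $n^{1-\sigma/2}$. The loss is intrinsic: the area estimate necessarily pays the full factor $M_\infty(r,f)^2\sim(1-r)^{-2/p}$.

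The missing idea is Hayman's inequality \eqref{hain}, $\int_0^{2\pi}|f'|^2|f|^{\lambda-2}\,{\rm d}t\le 4nM(r,f)^\lambda/(\lambda(1-r))$, with a \emph{tunable} exponent $\lambda\in(0,2)$: taking $\lambda$ small costs only $M(r,f)^\lambda\sim(1-r)^{-\lambda/p}$ instead of $M(r,f)^2$. After H\"older on the circle against $\int_0^{2\pi}|f|^{(2-\lambda)\sigma/(2-\sigma)}{\rm d}t\lesssim\|f\|_{H^p}^{(2-\lambda)\sigma/(2-\sigma)}$ (this is where the constraint $p\ge(2-\lambda)\frac{\sigma}{2-\sigma}$ enters), one gets $\int_0^{2\pi}|f'(re^{it})|^\sigma{\rm d}t\lesssim n^{\sigma/2}\lambda^{-\sigma/2}(1-r)^{-(1+\lambda/p)\sigma/2}\|f\|_{H^p}^\sigma$, which essentially eliminates the $(1-r)^{-\sigma/p}$ loss. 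When $p\ge\frac{2\sigma}{2-\sigma}$ one takes $\lambda=1/\log n$, and the factor $\lambda^{-\sigma/2}$ is the true source of the $(\log n)^{\sigma/2}$ in \eqref{gott1} --- not the bulk, which for $\alpha>0$ contributes a clean $n^{\alpha\sigma}$ by Lemma \ref{lem1-0} (your attribution of the logarithm to the bulk is incorrect, though harmlessly so). When $p<\frac{2\sigma}{2-\sigma}$ one is forced to take $\lambda=2-p(2-\sigma)/\sigma>0$, which produces the extra factor $n^{\lambda\sigma/(2p)}=n^{\sigma/p+\sigma/2-1}$ of \eqref{gott2}; the hypothesis $\sigma<\frac{p}{\alpha p+1}$ is exactly what guarantees $p(1-2\alpha)>\lambda$, i.e.\ convergence of the boundary weight integral. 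Without Hayman's inequality or an equivalent substitute exploiting the damping factor $|f|^{\lambda-2}$, your proof does not close.
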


As a byproduct of our estimates we obtain the following result which can be of independent interest. As was shown in \cite{BaKa},
for Blaschke products of degree $n$ the quantity $\int_\D |B'(z)|{\rm d}\mathcal{A}(z)$ can grow as $c\sqrt{\log n}$ (for some absolute positive constant $c$). However, the estimate remains true if we multiply $B'$ by some function from a Hardy space.

\begin{theorem}
\label{th3} 
Let $B$ be a finite Blaschke product of degree $n$ and $p\in(1,\infty)$. Then there exists $C=C(p)$ such that,
for any $g\in H^p$,
\[
\int_\D |B'(z) g(z)|{\rm d}\mathcal{A}(z) \le C \sqrt{\log (n+1)}\|g\|_{H^{p}}.
\]
\end{theorem}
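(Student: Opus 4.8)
The plan is to read the left-hand side as an integral against the measure $d\mu_B := |B'(z)|\,{\rm d}\mathcal{A}(z)$, writing $\int_\D |B'(z)g(z)|\,{\rm d}\mathcal{A}(z) = \int_\D |g|\,d\mu_B$, and then to deduce everything from two properties of $\mu_B$: that its total mass \emph{and} its Carleson norm are both $O(\sqrt{\log(n+1)})$. First I would record the total mass bound $\mu_B(\D)=\int_\D |B'|\,{\rm d}\mathcal{A}\le C\sqrt{\log(n+1)}$, which is exactly Theorem \ref{th1} applied to $f=B$: a finite Blaschke product of degree $n$ is $n$-valent, hence mean $n$-valent, and it is inner, so $\|B\|_{H^p}=1$ and the constant depends only on $p$.

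The heart of the argument is to show that $\mu_B$ is a Carleson measure with $\mu_B(S(I))\le C\sqrt{\log(n+1)}\,|I|$ for every arc $I\subset\T$, where $S(I)$ is the associated Carleson box and $|I|$ its length. For this I would use Möbius invariance. Fix $I$ of length $\delta=|I|$ centred at $e^{i\theta_0}$, and let $\varphi_I$ be the disc automorphism sending $0$ to the Whitney point $(1-\delta)e^{i\theta_0}$. Then $\widetilde B:=B\circ\varphi_I$ is again a finite Blaschke product of degree $n$ (a proper self-map of $\D$ of topological degree $n$), and the change of variables $z=\varphi_I(\zeta)$ combined with $\widetilde B'=(B'\circ\varphi_I)\,\varphi_I'$ gives
\[
\int_{S(I)}|B'|\,{\rm d}\mathcal{A}=\int_{\varphi_I^{-1}(S(I))}|\widetilde B'(\zeta)|\,|\varphi_I'(\zeta)|\,{\rm d}\mathcal{A}(\zeta).
\]
Since $\varphi_I^{-1}(S(I))$ lies in a fixed compact subset of $\D$ on which $|\varphi_I'|\asymp\delta$, the right-hand side is at most $C\delta\int_\D|\widetilde B'|\,{\rm d}\mathcal{A}$, and a second application of Theorem \ref{th1}, now to $\widetilde B$ (also of degree $n$), bounds this by $C\delta\sqrt{\log(n+1)}$. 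Dividing by $|I|=\delta$ yields the claimed Carleson estimate with the desired constant.

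Finally I would combine the two facts. By Hölder's inequality with exponents $p$ and $p'$,
\[
\int_\D|B'g|\,{\rm d}\mathcal{A}=\int_\D|g|\,d\mu_B\le\mu_B(\D)^{1/p'}\Big(\int_\D|g|^p\,d\mu_B\Big)^{1/p},
\]
and the Carleson embedding theorem for Hardy spaces gives $\int_\D|g|^p\,d\mu_B\le C(p)\,\|\mu_B\|_{\mathrm{Carleson}}\,\|g\|_{H^p}^p$. Inserting the two $\sqrt{\log(n+1)}$ bounds, the powers of the logarithm add as $\tfrac{1}{2p'}+\tfrac{1}{2p}=\tfrac12$, producing exactly $\int_\D|B'g|\,{\rm d}\mathcal{A}\le C(p)\sqrt{\log(n+1)}\,\|g\|_{H^p}$. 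The main obstacle is the Carleson estimate of the second paragraph: one must verify that precomposition with $\varphi_I$ preserves the class of degree-$n$ Blaschke products, so that Theorem \ref{th1} applies with an $n$-independent constant, and that the Jacobian comparison $|\varphi_I'|\asymp|I|$ holds uniformly on the preimage of the box; once these are in place, the embedding step and the Hölder bookkeeping are routine.
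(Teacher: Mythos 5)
Your reduction of the theorem to two properties of $\mu_B=|B'|\,{\rm d}\mathcal{A}$ --- total mass and Carleson norm both $O(\sqrt{\log(n+1)})$ --- would indeed suffice: the H\"older/Carleson-embedding bookkeeping in your last paragraph is correct, and the total mass bound does follow from Theorem \ref{th1} applied to $f=B$. The gap is in the key step, the Carleson estimate, and it is a genuine one. The set $\varphi_I^{-1}(S(I))$ is \emph{not} contained in a fixed compact subset of $\D$: the Carleson box $S(I)$ accumulates at the arc $I\subset\T$, and a disc automorphism maps $\T$ onto $\T$, so the preimage of $S(I)$ also accumulates at the boundary. Quantitatively, the identity $|\varphi_I'(\zeta)|=\bigl(1-|\varphi_I(\zeta)|^2\bigr)/\bigl(1-|\zeta|^2\bigr)$ gives only $|\varphi_I'(\zeta)|\le 2\delta/(1-|\zeta|^2)$ on $\varphi_I^{-1}(S(I))$, which is unbounded there; the comparison $|\varphi_I'|\asymp\delta$ holds only on the preimage of the top half of the box. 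Consequently the inequality $\int_{\varphi_I^{-1}(S(I))}|\widetilde B'|\,|\varphi_I'|\,{\rm d}\mathcal{A}\le C\delta\int_\D|\widetilde B'|\,{\rm d}\mathcal{A}$ does not follow, and the whole Carleson bound collapses. The estimate $\mu_B(S(I))\lesssim\sqrt{\log(n+1)}\,|I|$ can in fact be proved, but it requires an honestly localized argument: split $S(I)$ at height $|I|/n$, use $\int_0^{2\pi}|B'(re^{it})|\,{\rm d}t\le 2\pi n$ for the thin collar, and control the remaining part by a truncated Littlewood--Paley square function over $S(I)$, which is bounded via the Carleson-measure characterization of $BMOA$ applied to $B\in H^\infty$. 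None of this is in your sketch, and it is the entire content of the step.

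For comparison, the paper's proof avoids Carleson measures altogether and is more elementary: it splits $\D$ at radius $1-1/n^K$, handles the inner disc by writing $B'g=(Bg)'-Bg'$ and applying Lemma \ref{lem1}, and handles the outer annulus by H\"older's inequality together with the pointwise bound $|B'(re^{it})|\le(1-r^2)^{-1}$ and the integral bound $\int_0^{2\pi}|B'(re^{it})|\,{\rm d}t\le 2\pi n$, which combine to give $\bigl(\int_0^{2\pi}|B'|^{p/(p-1)}\,{\rm d}t\bigr)^{1-1/p}\lesssim n^{1-1/p}(1-r)^{-1/p}$; since $1/p<1$, the annulus contributes arbitrarily little once $K$ is large. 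If you wish to keep the Carleson-measure framework, you must replace the M\"obius-invariance paragraph by a localized estimate along the lines indicated above.
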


The case $p=1$ in Theorem \ref{th3} also remains open.
\medskip

Both Dolzhenko \cite{Dol} and Pekarskii \cite{pek} used estimates of integral means 
of the derivatives to obtain inverse theorems of rational approximation. 
For  a domain $G$ denote by $\mathcal{R}_{n}(G)$ the set of rational functions 
of degree at most $n$ with poles outside of the closure $\overline{G}$ of $G$. Given a
Banach space $X$ of holomorphic functions in $G$ (which contains $H^\infty(G)$) we consider
the best approximation of $f$ in $X$ by rational functions in $\mathcal{R}_{n}$:
\[
R_{n}(f,X)=\inf\left\{ \left\Vert f-g\right\Vert _{X}\::\:g\,\in\mathcal{R}_{n}(G)\right\}.
\]
Analogously, one can define best approximations by polynomials $E_n(f, X)$ or
best approximations by $n$-valent functions $V_n(f,X)$; in the latter case the supremum is taken over
all $n$-valent functions $g\in X$.

Among other results, Dolzhenko \cite[Theorem 3.6]{Dol} showed that for $1\le p\le 2$ 
and for sufficiently smooth domains
(in the case $p=1$ the domain $G$ is assumed to be bounded) the condition
\begin{equation} 
\label{tig}
\sum_{n=1}^\infty \frac{R_n(f, H^\infty(G))}{n^{1/p}} <\infty
\end{equation}
implies that $f'\in A^p(G)$, where $A^p(G)$ is the standard Bergman space in $G$. However, in the case $p=1$
this result again can be improved and an essentially sharp order of decay can be found
(see Theorem \ref{jkl} below).
Moreover, in the case of the disc $H^\infty$ can be replaced by any $H^p$ with $p>1$.

\begin{theorem}
\label{tinv}
Let $p\in(1,\infty]$ and $f\in H^{p}.$ If 
\begin{equation} 
\label{et}
\sum_{n=2}^\infty \frac{R_{n}(f,\,H^{p})}{n \sqrt{\log n}}<\infty,
\end{equation}
then $f'\in A^{1}(\D).$ 
\end{theorem}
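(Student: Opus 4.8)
The plan is to reduce $f'\in A^{1}(\D)$, i.e. $\int_\D|f'|\,{\rm d}\mathcal{A}<\infty$, to a one-dimensional summation over dyadic scales and to feed into it the $L^2$ (square-function) mechanism that lies behind Theorem~\ref{th1}. Fix near-best approximants $r_n\in\mathcal{R}_n(\D)$ with $\|f-r_n\|_{H^p}\le 2R_n(f,H^p)$; since $\sum_{n\ge 2}\frac1{n\sqrt{\log n}}=\infty$, hypothesis \eqref{et} forces $R_n(f,H^p)\to0$, so $r_n\to f$ in $H^p$ and $r_n'\to f'$ locally uniformly in $\D$. Decompose $\{\,\frac12\le|z|<1\,\}$ into the dyadic annuli $A_j=\{\,1-2^{-j}\le|z|<1-2^{-j-1}\,\}$, $j\ge1$, and put $D_j(h)=\big(\int_{A_j}|h'|^2(1-|z|^2)\,{\rm d}\mathcal{A}\big)^{1/2}$. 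Because $\int_{A_j}(1-|z|^2)^{-1}\,{\rm d}\mathcal{A}\le C$, the Cauchy--Schwarz inequality yields $\int_{A_j}|f'|\,{\rm d}\mathcal{A}\le C\,D_j(f)$, while the contribution of $\{\,|z|<\frac12\,\}$ is $\lesssim\|f\|_{H^p}$. Summing, the theorem is reduced to the bound $\sum_{j\ge1}D_j(f)<\infty$.

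The first thing to note is why the ``total'' square function is not enough and where the gain must come from. One has the identity $\sum_j D_j(f)^2=\int_\D|f'|^2(1-|z|^2)\,{\rm d}\mathcal{A}\asymp\|f\|_{H^2}^2$, which controls only the $\ell^2$-norm of $(D_j(f))_j$; and a crude per-annulus bound $D_j(f)\lesssim R_{2^j}(f,H^p)$ would only give $\sum_j R_{2^j}$, which \eqref{et} does not provide (it is a full logarithmic factor short). The correct quantity is the square-function increment: the crucial step I would carry out is to show
\[
D_j(f)\ \le\ C\,\bigl(R_{2^{j-1}}(f,H^p)^2-R_{2^{j}}(f,H^p)^2\bigr)^{1/2},
\]
expressing that the weighted Dirichlet mass of $f$ at the scale $2^{-j}$ is governed by the \emph{improvement} in rational approximation in passing from degree $2^{j-1}$ to degree $2^{j}$. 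This is a Littlewood--Paley type statement for rational approximation, and it is exactly here that the mean-valent input of Theorem~\ref{th1} (or of Theorem~\ref{th3}, applied to the Blaschke product carrying the poles of the relevant approximant) is used to localise the derivative of a rational function to the resonant annulus.

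Granting this estimate, write $S_j=R_{2^j}(f,H^p)^2$, a nonincreasing sequence tending to $0$. It remains to sum, and the decisive elementary inequality is
\[
\sum_{j\ge2}\bigl(S_{j-1}-S_j\bigr)^{1/2}\ \le\ C\sum_{j\ge1}\frac{S_j^{1/2}}{\sqrt j}\ +\ C\,S_1^{1/2}.
\]
It is proved by Cauchy--Schwarz over the dyadic blocks $(2^l,2^{l+1}]$: the block of length $\sim 2^l$ contributes at most $\sqrt{2^l}\,(S_{2^l}-S_{2^{l+1}})^{1/2}\le\sqrt{2^l}\,S_{2^l}^{1/2}$, after which an index shift compares $\sum_l\sqrt{2^l}\,S_{2^l}^{1/2}$ with $\sum_l\sqrt{2^l}\,S_{2^{l+1}}^{1/2}\lesssim\sum_j j^{-1/2}S_j^{1/2}$. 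Since $\sum_j j^{-1/2}S_j^{1/2}=\sum_j\frac{R_{2^j}(f,H^p)}{\sqrt j}\asymp\sum_{n}\frac{R_n(f,H^p)}{n\sqrt{\log n}}$, which is finite by \eqref{et}, the factor $\sqrt{2^l}\sim\sqrt{n}$ supplied by the block length is precisely what upgrades the naive telescoping condition $\sum_n\frac{\sqrt{\log n}}{n}R_n<\infty$ to the weaker hypothesis \eqref{et}. Combining, $\sum_j D_j(f)<\infty$, whence $f'\in A^1(\D)$.

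The main obstacle is the displayed square-function estimate of the second paragraph. Proving it amounts to showing that the degree-$2^j$ rational approximant resolves $f$ at the scale $2^{-j}$ in the weighted Dirichlet metric, with the off-diagonal interactions between distinct scales summing to a convergent geometric series; the mean-valence of the approximants, rather than any smoothness of $f$, is what makes this possible. A secondary but genuine technical point is the range $1<p<2$, where $\|\cdot\|_{H^2}\not\lesssim\|\cdot\|_{H^p}$: there one cannot use the plain $H^2$ Dirichlet identity and must instead control the localized pieces $D_j$ directly through Theorem~\ref{th1}/\ref{th3}, tracking the dependence of the constants on $p$.
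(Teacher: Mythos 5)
There is a genuine gap, and it sits exactly where you say it does: the displayed ``square-function increment'' estimate
\[
D_j(f)\ \le\ C\,\bigl(R_{2^{j-1}}(f,H^p)^2-R_{2^{j}}(f,H^p)^2\bigr)^{1/2}
\]
is not proved in your argument, and as stated it is false. Take $f(z)=z$: then $R_n(f,H^p)=0$ for every $n\ge 1$, so the right-hand side vanishes for all $j$, while $D_j(f)^2=\int_{A_j}(1-|z|^2)\,{\rm d}\mathcal{A}>0$. The same objection applies to any rational $f$ of fixed degree. More structurally, the telescoping ``difference of squares'' form is a Hilbert-space orthogonality phenomenon; best rational approximants in $H^p$ are not orthogonal projections, and there is no mechanism forcing the annulus $A_j$ to interact only with the degree-$2^j$ approximant --- every dyadic piece of $f$ contributes to every annulus. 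Your reduction to $\sum_j D_j(f)<\infty$ via Cauchy--Schwarz is fine, and your summation lemma (Cauchy--Schwarz over blocks of length $2^l$, giving the factor $\sqrt{2^l}$ that converts $\sum_n R_n\sqrt{\log n}/n$ into the weaker hypothesis $\sum_n R_n/(n\sqrt{\log n})$) is correct and is in essence the same dyadic bookkeeping the paper performs at the end. But without the localisation estimate the proof has no analytic core.

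The paper avoids this issue entirely by never trying to localise $f$ itself. It writes $f_{2^{2^k}}=f_2+\sum_{m=1}^k u_m$ with $u_m=f_{2^{2^m}}-f_{2^{2^{m-1}}}$ (note the \emph{double} exponential), observes that $u_m$ is rational of degree at most $2\cdot 2^{2^m}$ with $\|u_m\|_{H^p}\le 4R_{2^{2^{m-1}}}(f,H^p)$, and applies Theorem \ref{th1} \emph{globally} to each $u_m$ over the whole disc: $\|u_m'\|_{A^1(\D)}\lesssim \sqrt{\log 2^{2^m}}\,\|u_m\|_{H^p}\lesssim 2^{m/2}R_{2^{2^{m-1}}}(f,H^p)$. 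The triangle inequality and lower semicontinuity of the $A^1$ norm under local uniform convergence then reduce everything to $\sum_m 2^{m/2}R_{2^{2^m}}(f,H^p)<\infty$, which is equivalent to \eqref{et} because $\sum_{n=2^{2^{m-1}}}^{2^{2^m}}\frac{1}{n\sqrt{\log n}}\asymp 2^{m/2}$. The factor $\sqrt{\log(\deg u_m)}=2^{m/2}$ produced by Theorem \ref{th1} is exactly the block weight, which is why the doubly exponential indexing is the right one. If you want to salvage your scheme, the fix is to replace your per-annulus claim by this telescoping decomposition of $f$ into rational pieces and estimate each piece over all of $\D$ at once.
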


Using a construction similar to the construction proposed by J.\,E.~Littlewood \cite{Lit} one can show
that the condition of Theorem \ref{tinv} is optimal in the following sense. 

\begin{proposition}
\label{tinw}
Let $\phi$ be a positive function on $[1, \infty)$ such that 
$\lim_{x\to \infty} \phi(x)=0$. Then there exists a function $f$ such that 
$f\in H^p$ for any $p<\infty$,
$$
\sum_{n=2}^\infty \frac{E_{n}(f,\,H^{p})}{n \sqrt{\log n}} \phi(n) <\infty,
$$
but $f' \notin A^1$.
\end{proposition}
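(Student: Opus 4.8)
The plan is to realize $f$ as a lacunary (Hadamard-gap) power series
\[
f(z)=\sum_{j\ge1}b_j z^{m_j},\qquad m_{j+1}\ge 2m_j,\ b_j>0,
\]
and to choose the gaps $m_j$ and the coefficients $b_j$ in a way adapted to $\phi$. I would rely on three classical facts about lacunary series. First, by the Paley--Zygmund theorem, $f\in H^p$ for every $p<\infty$ as soon as $(b_j)\in\ell^2$. Second, taking as approximant the partial sum of all frequencies $\le n$ and using that for a lacunary series the $H^p$ norm is comparable to the $\ell^2$ norm of the coefficients, one gets, for $m_J\le n<m_{J+1}$,
\[
E_n(f,H^p)\ \lesssim\ \Big(\sum_{j>J}b_j^2\Big)^{1/2}=:T_J .
\]
Third, since $f'$ is again lacunary, its $L^1$ and $L^2$ norms on each circle $|z|=r$ are comparable (Zygmund), whence
\[
\int_\D |f'(z)|\,{\rm d}\mathcal{A}(z)\ \gtrsim\ \int_0^1\Big(\sum_j b_j^2 m_j^2 r^{2m_j}\Big)^{1/2}{\rm d}r\ \gtrsim\ \sum_j b_j .
\]
Thus it suffices to produce $(b_j),(m_j)$ with $\sum_j b_j^2<\infty$, $\sum_j b_j=\infty$ (which forces $f'\notin A^1$) and with the series of Proposition \ref{tinw} convergent.

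For the last point I would estimate the sum over blocks of degrees. Writing $W(x)=\sum_{2\le n\le x}\phi(n)/(n\sqrt{\log n})$, the convergence to be established reads
\[
\sum_{n\ge2}\frac{E_n(f,H^p)\,\phi(n)}{n\sqrt{\log n}}\ \lesssim\ \sum_J T_J\big(W(m_{J+1})-W(m_J)\big)<\infty .
\]
If $W(\infty)<\infty$ this is immediate (bound $T_J\le T_1$), and any divergent-but-square-summable choice such as $b_j=1/j$, $m_j=2^j$ works. The genuine case is $W(\infty)=\infty$. Here I would take $m_j=2^j$ (the smallest admissible gaps, which minimize $W(m_j)$), apply summation by parts to rewrite the bound as $\sum_J(T_{J-1}-T_J)\,W(2^J)$, and extract room from $\phi\to0$ through the Kronecker-type estimate
\[
W(2^J)=\sum_{n\le2^J}\frac{\phi(n)}{n\sqrt{\log n}}=o(\sqrt J),
\]
valid because $\sum_{n\le2^J}1/(n\sqrt{\log n})\asymp\sqrt J$ and $\phi\to0$.

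This reduces everything to a one-dimensional problem: given an increasing sequence $v_J:=W(2^J)=o(\sqrt J)$, find $T_J\downarrow0$ with $\sum_J\sqrt{T_{J-1}^2-T_J^2}=\infty$ yet $\sum_J(T_{J-1}-T_J)v_J<\infty$. I would solve it by a block construction: group the indices $J$ according to the dyadic size of $v_J/\sqrt J$, and on the block $G_m$ where $v_J\le 2^{-m}\sqrt J$ let $T$ decrease by a total amount $\tau_m$ of order $|G_m|^{-1/2}$, spread evenly across $G_m$. A short computation then shows that $G_m$ contributes $\lesssim 2^{-m}$ to the weighted series while contributing an amount bounded below by an absolute constant to $\sum_j b_j$; summing over $m$ yields both required (in)equalities. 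I expect the main obstacle to be exactly this borderline balancing act: the sharpness of Theorem \ref{tinv} means that without the decay of $\phi$ the two demands $\sum_j b_j=\infty$ and $\sum_J(T_{J-1}-T_J)v_J<\infty$ are contradictory, so all the slack must come from $v_J=o(\sqrt J)$, which forces one to first regroup the blocks so that their endpoints grow geometrically before the estimates close.
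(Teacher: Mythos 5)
Your proposal is correct and follows essentially the same route as the paper: a Littlewood-type lacunary series $\sum_j b_j z^{2^j}$ whose partial sums serve as polynomial approximants, $H^p$ norms controlled by the Khintchine--Paley comparability $\|\cdot\|_{H^p}\asymp\|\cdot\|_{H^2}$, the failure of $f'\in A^1$ read off from $\sum_j b_j=\infty$ (the paper uses the Fourier-coefficient Lemma \ref{lac} where you use $L^1\asymp L^2$ on circles), and the coefficients produced by a one-dimensional block construction extracting the needed slack from $\phi\to 0$ (the paper packages this as a nonincreasing $\psi$ with $\sum_k\psi(k)/k=\infty$ and $\sum_k\psi(k)\phi(2^k)/k<\infty$). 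The one step needing the care you already flag is the choice of blocks: the level sets of $v_J/\sqrt J$ need not be intervals, so one should instead pick $J_1<J_2<\cdots$ with $J_{i+1}\ge 2J_i$ and $v_J\le 2^{-i}\sqrt J$ for $J\ge J_i$ (possible since $v_J=o(\sqrt J)$), after which $|[J_i,J_{i+1})|\gtrsim J_{i+1}$ makes the bound $\sum_{J}c_Jv_J\lesssim 2^{-i}$ close and the first half of each block contributes a fixed constant to $\sum_j b_j$.
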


Note also that in the formulation of Theorem \ref{tinv} $R_n(f, H^p)$ 
can be replaced by a smaller quantity $V_n(f, H^p)$.
\bigskip


\section{Estimates far from the boundary}


In what follows we will use the classical Littlewood--Paley theorem (see \cite{Pav}, p.332). 
The latter result asserts that given $p>0$ and $f\in H^p$, if 
$$
S(f)(e^{it}) = \bigg(\int_0^{1}(1-r)|f'(re^{it})|^{2} {\rm d}r\bigg)^{1/2},
$$
then 
$$
\|S(f) \|_{L^p(\T)} \le C(p) \|f\|_{H^p}.
$$


\begin{lemma}
\label{lem1}
Let $f\in H^1$, $g\in H^\infty$ or  $f, g\in H^2$ and $K>0$. Then
$$
\int_{\abs{z}<1-1/n^K} |f'(z) g(z)|{\rm d}\mathcal{A}(z) \le C(K ) \sqrt{\log (n+1)} 
\begin{cases}
\|f\|_{H^1}\|g\|_{H^\infty}, \\
\|f\|_{H^2} \|g\|_{H^2},
\end{cases}
$$
respectively.
\end{lemma}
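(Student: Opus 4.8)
The plan is to pass to polar coordinates $z = re^{it}$ and exploit the Littlewood--Paley square function $S(f)$ to absorb the weight $(1-r)$ that naturally accompanies $|f'|^2$. Setting $\delta = n^{-K}$, I would first note that for each fixed $t$ the Cauchy--Schwarz inequality in the radial variable, applied to the splitting $|f'g| = \bigl((1-r)^{1/2}|f'|\bigr)\cdot\bigl((1-r)^{-1/2}|g|\bigr)$, gives
$$
\int_0^{1-\delta} |f'(re^{it})|\,|g(re^{it})| \, {\rm d}r
\le S(f)(e^{it})
\Bigl(\int_0^{1-\delta} \frac{|g(re^{it})|^2}{1-r}\, {\rm d}r \Bigr)^{1/2}.
$$
The crucial structural point is that the cut-off $|z|<1-\delta$ confines $1-r$ to the interval $(\delta,1)$, so that the singular weight $1/(1-r)$ integrates only to $\log(1/\delta)=K\log n$: this is precisely the origin of the logarithmic factor.

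In the first case $f\in H^1$, $g\in H^\infty$, I would bound $|g|\le\|g\|_{H^\infty}$ pointwise inside the second factor, so that $\int_0^{1-\delta}\frac{|g|^2}{1-r}\,{\rm d}r \le K\log n\,\|g\|_{H^\infty}^2$. Integrating the radial estimate in $t$ (the extra factor $r$ coming from ${\rm d}\mathcal{A}$ being harmless since $r\le 1$) and applying the Littlewood--Paley theorem with $p=1$ in the form $\int_0^{2\pi} S(f)(e^{it})\,{\rm d}t \le C\|f\|_{H^1}$, one arrives at the claimed bound $C(K)\sqrt{\log(n+1)}\,\|f\|_{H^1}\|g\|_{H^\infty}$.

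In the second case $f,g\in H^2$ the factor $g$ need not be bounded, so I would instead apply Cauchy--Schwarz a second time, now in the angular variable, after integrating the radial estimate over $t$. This produces the product of $\bigl(\int_0^{2\pi}S(f)^2\,{\rm d}t\bigr)^{1/2}$ and $\bigl(\int_0^{2\pi}\int_0^{1-\delta}\frac{|g|^2}{1-r}\,{\rm d}r\,{\rm d}t\bigr)^{1/2}$. The first is $\le C\|f\|_{H^2}$ by Littlewood--Paley with $p=2$; for the second I would interchange the order of integration and use that the $L^2$-means $\int_0^{2\pi}|g(re^{it})|^2\,{\rm d}t$ are nondecreasing in $r$ and bounded by $2\pi\|g\|_{H^2}^2$, so that the radial integral once more contributes $\log(1/\delta)=K\log n$. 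Combining the two factors yields $C(K)\sqrt{\log(n+1)}\,\|f\|_{H^2}\|g\|_{H^2}$.

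I would not expect a serious obstacle here: the argument is soft once the balanced splitting $(1-r)^{1/2}\cdot(1-r)^{-1/2}$ is chosen, and the only quantitative input is the elementary identity $\int_0^{1-\delta}\frac{{\rm d}r}{1-r}=\log(1/\delta)$, which converts the radial cut-off at distance $n^{-K}$ from the boundary into the factor $\sqrt{\log n}$. The Littlewood--Paley bound is quoted as available, and the genuine difficulty of the paper is deliberately confined to the excluded boundary annulus $1-n^{-K}<|z|<1$, where the mean $n$-valence hypothesis must be brought to bear in the subsequent sections.
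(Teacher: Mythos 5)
Your proposal is correct and follows essentially the same route as the paper: Cauchy--Schwarz against the weight $(1-r)^{1/2}\cdot(1-r)^{-1/2}$, the Littlewood--Paley square function for $f$, and the elementary $\int_0^{1-n^{-K}}\frac{{\rm d}r}{1-r}\asymp K\log n$ for the logarithmic factor. The only cosmetic difference is in the $H^2\times H^2$ case, where the paper applies Cauchy--Schwarz once over the area measure while you iterate it radially and then angularly; the two give the identical bound.
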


\begin{proof}
Assume first that $f\in H^1$, $g\in H^\infty$. By the Cauchy--Schwarz inequality we get
$$
\begin{aligned}
\int_0^{2\pi}\int_0^{1-1/n^K}|f'(re^{it}) g(re^{it})| r\, {\rm d}r dt & \leq  \|g\|_{H^\infty}
\int_0^{2\pi} S(f)(e^{it}) \left(\int_0^{1-1/n^K}\frac{r^2 {\rm d}r}{1-r}\right)^{1/2} {\rm d}t  \\
& \le  C(K ) \sqrt{\log (n+1)}\|f\|_{H^1}\|g\|_{H^\infty}
\end{aligned}
$$
by the Littlewood--Paley theorem. Now let $f,g\in H^2$. Again by the Cauchy--Schwarz inequality
$$
\begin{aligned}
& \int_{\abs{z}<1-1/n^K} |f'(z) g(z)|{\rm d}\mathcal{A}(z) \\
&\le 
\bigg( \int_{\abs{z}<1-1/n^K} |f'(z)|^2(1-|z|^2) {\rm d}\mathcal{A}(z)\bigg)^{1/2} 
\bigg( \int_{\abs{z}<1-1/n^K} \frac{|g(z)|^2}{1-|z|^2}{\rm d}\mathcal{A}(z)\bigg)^{1/2}  \\
& \le  C(K ) \sqrt{\log (n+1)}\|f\|_{H^2}\|g\|_{H^2},
\end{aligned}
$$
where the last inequality follows from the estimates 
$$
\int_{\abs{z}<1-1/n^{K}}|f'(z)|^{2}(1-|z|^{2}){\rm d}\mathcal{A}(z) 
\leq\int_{\mathbb{D}}|f'(z)|^{2}(1-|z|^{2}){\rm d}\mathcal{A}(z) \leq\|f\|_{H^{2}}^{2},
$$
and
\[
\int_{\abs{z}<1-1/n^{K}}\frac{|g(z)|^{2}}{1-|z|^{2}}{\rm d}\mathcal{A}(z)\leq
C\log(n+1)\|g\|_{H^{2}}^{2},
\]
$C$ being a nonnegative constant that depends on $K$ only. 
\end{proof}

\begin{lemma}
\label{lem1-0}
Let $\sigma<2$, $\alpha >0$, $p\ge 2$ and $K>0$. Then there exists $C=C(K, \sigma, \alpha, p)$ such that, for $f\in H^p$, 
$$
J_K(f) = \int_{\abs{z}<1-1/n^K} 
|f'(z)|^\sigma (1-|z|^2)^{(1-\alpha)\sigma -1} {\rm d}\mathcal{A}(z) \le Cn^{K\alpha \sigma} \|f\|_{H^p}^\sigma.
$$
\end{lemma}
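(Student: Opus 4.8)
The plan is to reduce the $\sigma$-th power of $|f'|$ to an $L^2$ quantity to which the Littlewood--Paley theorem applies, and to channel the entire growth in $n$ into a purely radial weight integral. First I would split the integrand as
\[
|f'(z)|^\sigma (1-|z|^2)^{(1-\alpha)\sigma-1} = \bigl(|f'(z)|^2(1-|z|^2)\bigr)^{\sigma/2}\,(1-|z|^2)^{\beta}, \qquad \beta = \frac{\sigma}{2}-\alpha\sigma-1,
\]
and then apply Hölder's inequality on $\{\abs{z}<1-1/n^K\}$ with the conjugate exponents $2/\sigma$ and $2/(2-\sigma)$. The hypothesis $\sigma<2$ is precisely what makes $2/(2-\sigma)$ a legitimate (positive) exponent. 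This yields
\[
J_K(f) \le \bigg(\int_{\abs{z}<1-1/n^K}|f'(z)|^2(1-|z|^2)\,{\rm d}\mathcal{A}(z)\bigg)^{\sigma/2}\bigg(\int_{\abs{z}<1-1/n^K}(1-|z|^2)^{2\beta/(2-\sigma)}\,{\rm d}\mathcal{A}(z)\bigg)^{(2-\sigma)/2}.
\]

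For the first factor I would simply enlarge the region of integration to all of $\D$ and use the standard area-integral bound: since $(1-r^2)\,r \le 2(1-r)$, one has $\int_\D |f'|^2(1-|z|^2)\,{\rm d}\mathcal{A} \le C\int_\T S(f)^2\,{\rm d}t = C\|S(f)\|_{L^2(\T)}^2 \le C\|f\|_{H^2}^2$ by the Littlewood--Paley theorem at $p=2$. Because $p\ge 2$, we have $\|f\|_{H^2}\le\|f\|_{H^p}$, so this factor is bounded by $C\|f\|_{H^p}^\sigma$ with no dependence on $n$. This is the \emph{only} place the assumption $p\ge 2$ enters.

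All the $n$-growth therefore comes from the second, radial factor. A short computation gives the weight exponent $\frac{2\beta}{2-\sigma} = -1-\frac{2\alpha\sigma}{2-\sigma}$, which is strictly less than $-1$. Passing to polar coordinates and substituting $s=1-r^2$ (so the lower endpoint of the $s$-integral is comparable to $n^{-K}$), the integral $\int_{\approx\, n^{-K}}^{1} s^{-1-2\alpha\sigma/(2-\sigma)}\,{\rm d}s$ is dominated by its value at the lower endpoint, producing $C\,n^{2K\alpha\sigma/(2-\sigma)}$. Raising this to the power $(2-\sigma)/2$ gives exactly $C\,n^{K\alpha\sigma}$, and combining the two factors yields the asserted inequality.

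The argument is essentially routine once the factorization is in place; the only point demanding care is matching the exponents so that the weight integral produces precisely the factor $n^{K\alpha\sigma}$ and not a spurious power, and verifying that $\sigma<2$ and $p\ge 2$ are exactly the conditions needed to run Hölder's inequality and the $L^2$ Littlewood--Paley estimate, respectively.
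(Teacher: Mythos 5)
Your proof is correct and follows essentially the same route as the paper: both apply H\"older's inequality with the conjugate exponents $2/\sigma$ and $2/(2-\sigma)$ to peel off the factor $\bigl(|f'(z)|^2(1-|z|^2)\bigr)^{\sigma/2}$ and push all the $n$-dependence into the radial weight integral $\int_0^{1-1/n^K}(1-r)^{-1-2\alpha\sigma/(2-\sigma)}\,{\rm d}r \lesssim n^{2K\alpha\sigma/(2-\sigma)}$. The only (harmless) difference is that you apply H\"older over the whole area rather than radially for each fixed angle, which lets you get by with the $p=2$ Parseval-type identity and the embedding $\|f\|_{H^2}\le\|f\|_{H^p}$ instead of the $L^p$ Littlewood--Paley theorem used in the paper.
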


\begin{proof}
By the H\"older inequality with the exponents $\frac{2}{\sigma}$ and $\frac{2}{2-\sigma}$ we have
$$
J_K(f) \le
\int_0^{2\pi} \big(S(f)(t)\big)^\sigma \left(\int_0^{1-1/n^K}\frac{{\rm d}r}
{(1-r)^\beta} \right)^{(2-\sigma)/2} dt,
$$
where $\beta = (-\sigma/2 + 1+\alpha\sigma)\frac{2}{2-\sigma} = 1 + \frac{2\alpha\sigma }{2-\sigma}$.  
The conclusion follows by a direct computation.
\end{proof}
\bigskip


\section{Hayman's inequality and estimates near the boundary}
\label{poj}

To estimate the integral in a narrow annulus near the boundary we will use an estimate due to 
W.K.~Hayman \cite[Lemma 3.1]{hay}. For a function $f$ analytic in the disc, 
we put $M(r,f) = \max_{|z|=r} |f(z)|$. Then for any
mean $n$-valent function $f$, for any $\lambda\in (0,2)$ and any $r\in (1/2,1)$ there exists $\tilde r$ such that 
$2r-1 \le \tilde r \le r$ (that is, $(1-\tilde r)/2 \le 1-r \le 1-\tilde r$) with
\begin{equation}
\label{hain}
\int_0^{2\pi} |f'(\tilde re^{it})|^2 |f(\tilde re^{it})|^{\lambda-2} {\rm d}t\le 4n\frac{\big(M(r, f)\big)^{\lambda}}{\lambda(1-r)}.
\end{equation}
The following estimate is an immediate corollary of  \eqref{hain}.

\begin{lemma}
\label{lem2}
Let $1\le p<2$ and $f\in H^p$ be mean $n$-valent. Then
$$
\int_0^{2\pi} |f'(re^{it})| {\rm d}t\le C(p) \frac{n^{1/2}}{(1-r)^{1/p}}\|f\|_{H^p}.
$$
\end{lemma}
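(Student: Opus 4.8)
The plan is to combine Hayman's inequality \eqref{hain} with two classical facts about Hardy spaces: the monotonicity of the integral mean $r \mapsto \int_0^{2\pi} |g(re^{it})|\,{\rm d}t$ for analytic $g$ (a consequence of the subharmonicity of $|g|$, i.e.\ Hardy's convexity theorem), and the pointwise growth bound $M(\rho, f) \le C(p)(1-\rho)^{-1/p}\|f\|_{H^p}$ valid for $f \in H^p$.

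The essential point is that \eqref{hain} produces a good radius $\tilde\rho$ lying to the \emph{left} of the radius at which it is invoked, whereas the estimate must be established at the prescribed radius $r$. To arrange that the good radius falls to the \emph{right} of $r$, I would apply \eqref{hain} not at $r$ but at $\rho := (1+r)/2 \in (1/2,1)$. Since the resulting $\tilde\rho$ satisfies $2\rho - 1 \le \tilde\rho \le \rho$, we obtain $r \le \tilde\rho \le (1+r)/2$, and in particular $1 - \rho = (1-r)/2$. By monotonicity of the integral mean and $r \le \tilde\rho$,
$$
\int_0^{2\pi} |f'(re^{it})|\,{\rm d}t \le \int_0^{2\pi} |f'(\tilde\rho e^{it})|\,{\rm d}t,
$$
so it suffices to estimate the mean at $\tilde\rho$.

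To do so, choose $\lambda = 2 - p \in (0,1] \subset (0,2)$ and split $|f'| = \bigl(|f'|\,|f|^{(\lambda-2)/2}\bigr)\cdot |f|^{(2-\lambda)/2}$. The Cauchy--Schwarz inequality then gives
$$
\int_0^{2\pi} |f'(\tilde\rho e^{it})|\,{\rm d}t \le \left(\int_0^{2\pi} |f'(\tilde\rho e^{it})|^2 |f(\tilde\rho e^{it})|^{\lambda - 2}\,{\rm d}t\right)^{1/2} \left(\int_0^{2\pi} |f(\tilde\rho e^{it})|^p\,{\rm d}t\right)^{1/2}.
$$
The first factor is controlled by \eqref{hain} applied at $\rho$ with this $\lambda$, while the second factor is at most $(2\pi)^{1/2}\|f\|_{H^p}^{p/2}$, again by monotonicity of the $p$-means. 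Feeding in $M(\rho, f) \le C(p)(1-\rho)^{-1/p}\|f\|_{H^p}$ and $1-\rho = (1-r)/2$, the exponents collapse neatly: the powers of $\|f\|_{H^p}$ combine as $\tfrac{2-p}{2} + \tfrac{p}{2} = 1$, and under the square root the power of $(1-\rho)$ is $\tfrac{2-p}{p} + 1 = \tfrac{2}{p}$, which yields the factor $(1-r)^{-1/p}$ after taking the square root. This proves the bound for $r \in (1/2,1)$, and the range $r \in (0,1/2]$ follows at once from monotonicity by comparison with $r = 1/2$.

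I expect the only genuinely important maneuver to be invoking \eqref{hain} at $(1+r)/2$ rather than at $r$, so that monotonicity of the means transfers the estimate from the good radius $\tilde\rho$ onto the target radius $r$; the remaining work—tracking the powers of $(1-r)$ and of $\|f\|_{H^p}$ through Cauchy--Schwarz and Hayman's bound so that they sum to $1/p$ and $1$ respectively—is routine bookkeeping.
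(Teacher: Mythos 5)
Your proof is correct and is essentially the paper's own argument: both apply Hayman's inequality \eqref{hain} at the shifted radius $(1+r)/2$ so that the good radius $\tilde r$ lands to the right of $r$, use Cauchy--Schwarz with $\lambda = 2-p$ together with the bound $M(\rho,f)\le C(p)(1-\rho)^{-1/p}\|f\|_{H^p}$, and finish by monotonicity of the integral means. The exponent bookkeeping matches the paper's computation $1/2+\lambda/(2p)=1/p$ exactly.
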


\begin{proof}
Note that for $f\in H^p$ one has $M(r,f) \le C(p) \frac{\|f\|_{H^p}}{(1-r)^{1/p}}$.
Let $1/2 < r<1$ and $r_0 =\frac{1+r}{2}$. Apply Hayman's estimate with $\lambda = 2-p$ to $r_0$ 
to get the corresponding
$\tilde r \ge 2r_0 -1 =r$. Then, using Cauchy-Schwarz inequality once again,
$$
\begin{aligned}
\int_0^{2\pi} |f'(\tilde re^{it})| {\rm d}t& \le
\bigg( \int_0^{2\pi} |f'(\tilde re^{it})|^2 |f(\tilde re^{it})|^{\lambda-2} {\rm d}t\bigg)^{1/2}
\bigg( \int_0^{2\pi} |f(\tilde re^{it})|^{2-\lambda} {\rm d}t\bigg)^{1/2} \\
& \le \bigg(4n\frac{\big(M(r_0, f)\big)^{\lambda}}{\lambda(1-r_0)} \bigg)^{1/2} \|f\|^{p/2}_{H^p} 
\le C(p) \frac{n^{1/2}}{(1-r)^{1/2+\lambda/(2p)}}\|f\|_{H^p}.
\end{aligned}
$$
Since  $1/2+\lambda/(2p) = 1/p$ and the integral $\int_0^{2\pi} |f'(re^{it})| {\rm d} t$ is an increasing function 
of $r$, the lemma is proved.
\end{proof}

\begin{proof}[Proof of Theorem \ref{th1}]
Let $f\in H^p$ with $1<p<2$. By Lemma \ref{lem1} applied to $g\equiv 1$, we have, for any $K>0$,
$$
\int_{\abs{z}<1-1/n^K} |f'(z)|{\rm d}\mathcal{A}(z) \le C(K ) \sqrt{\log (n+1)} \|f\|_{H^1}.
$$
By Lemma \ref{lem2},
$$
\int_{1-1/n^K \le \abs{z} <1} |f'(z)|{\rm d}\mathcal{A}(z)
 \le C(p) \|f\|_{H^p} n^{1/2} \int_{1-1/n^K}^1 \frac{{\rm d}r}{(1-r)^{1/p}}. 
$$
Since $p>1$, choosing a sufficiently large $K$ we can make this integral as small as we wish. 
\end{proof}

Theorem \ref{th1} can be slightly extended (with essentially the same proof) to the case of  
mean $n$-valent functions in more general domains.

\begin{theorem}
\label{th4}
Let $G$ be a bounded simply connected domain and let $\phi$ be a conformal mapping from $\D$ onto $G$.
Let $f$ be a mean $n$-valent function in $G$ and assume that either

1. $f\circ \phi \in H^p$ for some $p>1$ and $\phi'\in H^\infty$,
\\
or

2. $f\circ \phi \in H^p$ with $p>2$ and $\phi'\in H^2$. 
\\
Then
$$
\int_G |f'(w)|{\rm d}\mathcal{A}(w) \le C(G,p) \sqrt{\log(n+1)} \|f\circ \phi\|_{H^p}.
$$
\end{theorem}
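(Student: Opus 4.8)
The plan is to transport the problem to the unit disc through $\phi$ and then reuse the machinery already built for Theorem~\ref{th1}. Set $F=f\circ\phi$. Since $\phi\colon\D\to G$ is a conformal bijection, for every $w$ the equation $F(\zeta)=w$ has exactly as many solutions in $\D$ as $f(z)=w$ has in $G$ (multiplicities preserved), so $F$ is mean $n$-valent in $\D$ with the same $n$; by hypothesis $F\in H^p$. The change of variables $w=\phi(\zeta)$ gives ${\rm d}\mathcal{A}(w)=|\phi'(\zeta)|^2{\rm d}\mathcal{A}(\zeta)$ and $f'(\phi(\zeta))=F'(\zeta)/\phi'(\zeta)$, whence
\[
\int_G|f'(w)|{\rm d}\mathcal{A}(w)=\int_\D|F'(\zeta)\phi'(\zeta)|{\rm d}\mathcal{A}(\zeta).
\]
Thus it suffices to bound $\int_\D|F'\phi'|{\rm d}\mathcal{A}$ by $C(G,p)\sqrt{\log(n+1)}\|F\|_{H^p}$, and I split the disc into the inner region $\{|\zeta|<1-1/n^K\}$ and the boundary annulus $\{1-1/n^K\le|\zeta|<1\}$ exactly as in the proof of Theorem~\ref{th1}.

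For the inner region I would apply Lemma~\ref{lem1} with $g=\phi'$. In case~1 I use the branch $F\in H^1$, $\phi'\in H^\infty$ (here $F\in H^p\subset H^1$ since $p>1$); in case~2 the branch $F,\phi'\in H^2$ (here $F\in H^p\subset H^2$ since $p>2$). In either case Lemma~\ref{lem1} yields a bound $C(K)\sqrt{\log(n+1)}\|F\|_{H^p}\cdot\|\phi'\|$, where $\|\phi'\|$ is the relevant fixed norm depending only on $G$. Note that this step uses no valency hypothesis whatsoever; the valency enters only near the boundary.

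The boundary annulus is where the two cases genuinely differ and where the real work lies. In case~1, since $\phi'\in H^\infty$ I simply pull out $\|\phi'\|_{H^\infty}$ and apply Lemma~\ref{lem2} with an exponent $q\in(1,2)$, $q\le p$, controlling $\int_0^{2\pi}|F'(re^{it})|{\rm d}t$ by $Cn^{1/2}(1-r)^{-1/q}\|F\|_{H^p}$; integrating over an interval of length $n^{-K}$ produces the factor $n^{1/2-K(1-1/q)}$, which I render $\le 1$ by choosing $K$ large (possible since $q>1$). In case~2 I cannot pull out $\phi'$, and this is the crux. Here, for each $r$ I set $r_0=(1+r)/2$ and apply Hayman's estimate \eqref{hain} to obtain $\tilde r\ge r$, then use Cauchy--Schwarz at radius $\tilde r$ in the form
\[
\int_0^{2\pi}|F'\phi'|{\rm d}t\le\Big(\int_0^{2\pi}|F'|^2|F|^{\lambda-2}{\rm d}t\Big)^{1/2}\Big(\int_0^{2\pi}|F|^{2-\lambda}|\phi'|^2{\rm d}t\Big)^{1/2}.
\]
The first factor is controlled by \eqref{hain} together with $M(r_0,F)\le C\|F\|_{H^p}(1-r)^{-1/p}$; the second by $M(\tilde r,F)^{2-\lambda}\int_0^{2\pi}|\phi'|^2{\rm d}t\le C\|F\|_{H^p}^{2-\lambda}(1-r)^{-(2-\lambda)/p}\|\phi'\|_{H^2}^2$, using that $L^2$-means of an $H^2$ function are bounded by its norm. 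Computing the resulting power of $(1-r)$, the parameter $\lambda$ cancels and one is left with $\int_0^{2\pi}|F'(\tilde re^{it})\phi'(\tilde re^{it})|{\rm d}t\le Cn^{1/2}(1-r)^{-1/2-1/p}\|F\|_{H^p}\|\phi'\|_{H^2}$. Since $F'\phi'$ is analytic in $\D$, its circular $L^1$-mean is nondecreasing in the radius, so the same bound holds at $r$ itself; integrating gives $\int_{1-1/n^K}^1(1-r)^{-1/2-1/p}{\rm d}r\le Cn^{-K(1/2-1/p)}$, which converges precisely because $p>2$, and a large $K$ again absorbs the factor $n^{1/2}$.

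The main obstacle is exactly this case~2 boundary estimate: with $\phi'$ only in $H^2$, the naive bound $|\phi'(re^{it})|\le M(r,\phi')\sim(1-r)^{-1/2}$ combined with Lemma~\ref{lem2} yields the divergent exponent $(1-r)^{-1/2-1/q}$ with $q<2$. The purpose of splitting $|F'|^2|F|^{\lambda-2}$ against $|F|^{2-\lambda}|\phi'|^2$ is that it routes the entire boundary blow-up through the maximum-modulus factor governed by $1/p$, cancels the $\lambda$-dependence, and leaves the integrable exponent $-1/2-1/p$ exactly in the range $p>2$. Combining the inner and boundary estimates in each case then gives the claimed inequality, with $C(G,p)$ depending on $\|\phi'\|_{H^\infty}$ in case~1 and on $\|\phi'\|_{H^2}$ in case~2.
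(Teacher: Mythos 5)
Your proof is correct and follows essentially the same route as the paper's: change of variables via $\phi$, Lemma~\ref{lem1} on the inner disc, and Hayman's estimate \eqref{hain} with Cauchy--Schwarz splitting $|F'|^2|F|^{\lambda-2}$ against $|F|^{2-\lambda}|\phi'|^2$ on the boundary annulus. The only cosmetic differences are that the paper simply fixes $\lambda=1$ in Case~2 (where you keep $\lambda$ general and observe the cancellation) and, in Case~1, cites Theorem~\ref{th1} directly after pulling out $\|\phi'\|_{H^\infty}$ rather than re-running its proof.
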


\begin{proof}
By the change of variable,
$$
\int_G |f'(w)|{\rm d}\mathcal{A}(w) = \int_\D |(f\circ \phi)'(z) \phi'(z)| {\rm d}\mathcal{A}(z).
$$
Now in the Case 1 the statement follows directly from Theorem \ref{th1} since $f\circ \phi$ is mean $n$-valent in $\D$.

In the Case 2 the estimate of the integral of $|(f\circ \phi)'(z) \phi'(z)|$ over the disc $\{|z| <1-1/n^K\}$ follows from Lemma \ref{lem1}.
To estimate the integral over the annulus $\{1-1/n^K \le |z| <1\}$ we apply Hayman's estimate \eqref{hain} with $\lambda=1$. 
Put $g=f\circ \phi \in H^2$. Then, choosing $\tilde r$ for a given $r$, we have
$$
\int_0^{2\pi} |g'(\tilde re^{it}) \phi'(\tilde re^{it})| {\rm d}t  \le
\bigg( \int_0^{2\pi} |g'(\tilde re^{it})|^2 |g(\tilde re^{it})|^{-1} {\rm d}t \bigg)^{1/2}
\bigg( \int_0^{2\pi} |\phi' (\tilde re^{it})|^2 |g(\tilde re^{it})|{\rm d}t \bigg)^{1/2}.
$$
Then, making use of the fact that $M(\tilde r, g) \le \|g\|_{H^p} (1-\tilde r)^{-1/p}$, we get
$$
\int_0^{2\pi} |g'(\tilde re^{it}) \phi'(\tilde re^{it})| {\rm d}t \le C(p) 
\sqrt{n} \|g\|_{H^p} \|\phi'\|_{H^2} (1-r)^{-1/2 -1/p}.
$$
Since $1/2+1/p<1$, it remains to choose a sufficiently large $K$.
\end{proof}

\begin{remark}
{\rm In the case when $f$ is a rational function, one can give a simpler proof of Theorem \ref{th1} which does not use
 the results of Hayman. For  $r\in(0,1)$ denote by $\mathcal{R}_{n,r}$ the set of rational functions of degree at most $n$ which have 
no poles in the disc $\{|z|<1/r\}$.
Sharp Bernstein-type inequalities for rational functions $f \in \mathcal{R}_{n,r}$ were found in \cite[Theorem 2.3]{BaZa}. Namely,
if $1\le p \le q \le \infty$, then
$$
\|f'\|_{H^q} \le C(p,q) \frac{n^{1+1/p-1/q}}{(1-r)^{1+1/p-1/q}} \|f\|_{H^p},
$$
while for  $1\le q \le p \le \infty$
$$
\|f'\|_{H^q} \le C(p,q) \frac{n}{(1-r)^{1+1/p-1/q}} \|f\|_{H^p}.
$$
Now, if $f\in H^p$ is a rational function, then, applying the above estimate to $f_r(z) = f(rz)$, we conclude that
$\|f_r\|_{H^1} \le \frac{n}{(1-r)^{1/p}}$, and integration with respect to $r$ over $(1-1/n^K, 1)$ for sufficiently
large $K$ gives the required estimate for the boundary annulus. 

Note that while the estimates in \cite[Theorem 2.3]{BaZa} are best possible, the function $f_r$ 
has more regularity than a general function in $\mathcal{R}_{n,r}$ since it is in $H^p$ in a larger disc. 

It is an interesting open problem to find the sharp growth of means in Theorem \ref{th1} for rational functions in $H^1$.}
\end{remark}

\begin{remark}
{\rm In \cite{bz19} the following inequality was proved for rational functions of degree at most $n$:
$\|f'\|_{A^1(\D)}\le C \log n \, \|f\|_{BMOA}$, where $BMOA$ denotes the
space of analytic functions of bounded mean oscillation in $\D$, and the question about sharpness of this
inequality was posed. Since $BMOA \subset H^p(\D)$ for any $p<\infty$, we know now that a correct and sharp
inequality is $\|f'\|_{A^1(\D)} \le C \sqrt{\log n} \|f\|_{BMOA}$ and it holds for
any mean $n$-valent function.

It should be mentioned that it is much easier to show the sharpness of this inequality in $BMOA$ (and thus in all $H^p$
with $p<\infty$), than for $p=\infty$, where the proof was based on a deep and implicit construction
of Bloch space functions from \cite{BanMoo}. Indeed, consider the polynomials
$$
P_n(z) = \sum_{k=1}^n z^{2^k}.
$$
It is easy to see (see, e.g., Lemma \ref{lac} below) that $\|f'\|_{A^1(\D)} \asymp n$.
On the other hand, it is well known (see \cite[Theorem 9.3]{gir}) that for a lacunary series, 
$\|f\|_{BMOA} \asymp \|f\|_{H^2(\D)}$, and so in our case $\|f\|_{BMOA} \asymp \sqrt{n}$.
Here $a \asymp b$ means that the ratio $a/b$ is bounded away from zero and infinity by some absolute constants.
Thus, $\|f'\|_{A^1(\D)}/\|f\|_{BMOA} \ge c \sqrt{n} = c\sqrt{\log {\rm deg} P_n}$ for some absolute $c>0$.}
\end{remark}
\bigskip


\section{Proof of Theorem \ref{th2}}

The estimate over the disc $\{|z| < 1-1/n\}$ follows from Lemma \ref{lem1-0} with $K=1$. To estimate the integral over the
annulus $\{1-1/n \le |z| <1 \}$ we use again Hayman's estimate \eqref{hain}. By the H\"older inequality 
with exponents $2/\sigma$ and $2/(2-\sigma)$,
$$
\int_0^{2\pi}|f'(re^{it})|^\sigma {\rm d}t \le \bigg( \int_0^{2\pi}|f'(re^{it})|^2 
|f(re^{it})|^{\lambda - 2} {\rm d}t \bigg)^{\sigma/2}
\bigg( \int_0^{2\pi}|f(re^{it})|^{(2- \lambda)\sigma/(2-\sigma)} {\rm d}t \bigg)^{1-\sigma/2}
$$
for any $\lambda\in (0,2)$. Assume that $p\ge (2- \lambda)\frac{\sigma}{2-\sigma}$, 
then the second factor above
does not exceed $\|f\|_{H^p}^{(2-\lambda)\sigma/2}$. Making use of the fact that 
 $M(r,f) \le C(p) \frac{\|f\|_{H^p}}{(1-r)^{1/p}}$ and arguing as in the proof of Theorem \ref{th1},
we conclude that
$$
\int_0^{2\pi}|f'(re^{it})|^\sigma {\rm d}t \le C(p)\|f\|_{H^p}^{\sigma} \frac{n^{\sigma/2}}{\lambda^{\sigma/2}
(1-r)^{(1+\lambda/p)\sigma/2}},\qquad r\in(0,1/2).
$$
Hence, 
$$
\|f\|^\sigma_{B_\sigma^\alpha} \le C(p) \|f\|_{H^p}^{\sigma} \frac{n^{\sigma/2}}{\lambda^{\sigma/2}} 
\int_{1-1/n}^1 \frac{{\rm d}r}{(1-r)^\beta},
$$
where
$$
\beta = 1 + (1+\lambda/p)\sigma/2 - (1-\alpha)\sigma.
$$
The integral will converge whenever $p(1-2\alpha) >\lambda$. Thus, we have the estimate
\begin{equation}
\label{hosk}
\|f\|^\sigma_{B_\sigma^\alpha} \le C(p, \alpha,\sigma) \frac{n^{\alpha\sigma + \lambda\sigma/(2p)}}{\lambda^{\sigma/2}} \|f\|_{H^p}^{\sigma},
\end{equation}
when the conditions
\begin{equation}
\label{hos}
p\ge (2- \lambda)\frac{\sigma}{2-\sigma},  \qquad p(1-2\alpha) >\lambda
\end{equation}
are satisfied. 

Assume first that $p\ge \frac{2\sigma}{2-\sigma}$. In this case inequalities \eqref{hos}
will be satisfied for $\lambda = \frac{1}{\log n}$
and so $n^{\lambda\sigma/(2p)} = O(1)$, $n\to \infty$. This gives \eqref{gott1}.

Now assume that $p< \frac{2\sigma}{2-\sigma}$ and define $\lambda$ by the equation
$p = (2- \lambda)\frac{\sigma}{2-\sigma}$. Since in this case we assume that $\sigma < \frac{p}{\alpha p+1}$,
it is easy to see that $p(1-2\alpha) >\lambda$. Inserting this $\lambda$ into \eqref{hosk}
we obtain inequality \eqref{gott2}.
\qed
\bigskip


\section{Proof of Theorem \ref{th3}}

Fix $K>0$. Note that $B'g=(Bg)'-Bg'$ and so the estimate of the integral 
$\int_{|z|<1-1/n^K} |B'(z) g(z)|{\rm d}\mathcal{A}(z)$
follows from Lemma \ref{lem1}.

Let us estimate the integral over the annulus $\{1-1/n^{K}\le|z|<1\}$.
We have 
\[
\int_{0}^{1}\int_{0}^{2\pi}|B'(re^{it})g(re^{it})|r {\rm d}t{\rm d}r \leq\|g\|_{H^{p}}
\int_{0}^{1}\bigg(\int_{0}^{2\pi}|B'(re^{it})|^{p/(p-1)} {\rm d}t \bigg)^{1-1/p}
{\rm d}r.
\]
It follows from the inequalities 
\[
|B'(re^{it})|\leq \frac{1}{1-r^2}, \qquad 
\int_{0}^{2\pi}|B'(re^{it})|{\rm d}t\leq  2\pi n,
\]
that for any $q\in[1,\infty)$,
$$
\int_{0}^{2\pi}|B'(re^{it})|^{q}{\rm d}t  \leq\frac{1}{(1-r^{2})^{q-1}}\int_{0}^{2\pi}|B'(re^{it})|{\rm d}t
 \leq\frac{2\pi n}{(1-r^{2})^{q-1}},
$$
and, in particular, choosing $q=p/(p-1)$ we get
\[
\bigg(\int_{0}^{2\pi} |B'(re^{it})|^{p/(p-1)} {\rm d}t \bigg)^{1-1/p} 
\leq \bigg(\frac{2\pi n}{(1-r^2)^{1/(p-1)}} \bigg)^{1-1/p} \leq \frac{(2\pi n)^{1-1/p}}{(1-r)^{1/p}}.
\]
Since $1/p<1$, the integral over $r\in(1-1/n^{K},1)$ can be made
arbitrarily small. \qed
\bigskip


\section{Application to an inverse theorem of rational approximation }

In this section we prove Theorem \ref{tinv}. The proof follows 
a standard ``dyadic'' scheme used, e.g., in \cite{Dol}, however with a double exponentiation.

In what follows for two positive functions $a$ and $b$, we say that $a$ is dominated
by $b$, denoted by $a\lesssim b$, if there is a constant $C>0$
such that $a\leq Cb$; we say that $a$ and $b$ are comparable, denoted
by $a\asymp b$, if both $a\lesssim b$ and $b\lesssim a$. 

For any $n$ take $f_{n}\in\mathcal{R}_{n}$ such that $\|f-f_{n}\|_{H^{p}}\leq 2R_{n}(f,\,H^{p})$
and write 
$$
f_{2^{2^k}} = f_2+ \sum_{m=1}^k u_m, \qquad u_m = f_{2^{2^m}} - f_{2^{2^{m-1}}}.
$$
Since $f_{2^{2^k}}$ converge to $f$ in $H^p$ and uniformly on compact
subsets of $\mathbb{D}$, we have
$$
\|f'\|_{A^1(\D)} \le \liminf_{k\to \infty}\big\|f_{2^{2^k}}'\big\|_{A^1(\D)} \le \|f_2'\|_{A^1(\D)} +
\liminf_{k\to \infty} \sum_{m=1}^k \|u_m'\|_{A^1(\D)}.
$$ 
Clearly, we have $\|u_m\|_{H^p} = \big\|f_{2^{2^m}} - f_{2^{2^{m-1}}}\big\|_{H^p} \le 4 
R_{2^{2^{m-1}}}(f,\,H^{p})$.

Since $u_m \in \mathcal{R}_{2^{2^m}}(\D)$, by Theorem \ref{th1} we have
$$
\|u_m'\|_{A^1(\D)} \lesssim \sqrt{\log 2^{2^m}} \|u_m\|_{H^p} \lesssim 2^{m/2} R_{2^{2^{m-1}}}(f,\,H^{p}).
$$
Thus, $f'\in A^1(\D)$ whenever
$$
\sum_{m=1}^\infty 2^{m/2} R_{2^{2^m}}(f,\,H^{p}) <\infty.
$$
It remains to show that the latter condition is implied by (actually is equivalent to) the condition
$$
\sum_{n=1}^\infty \frac{R_n(f,\,H^{p})}{n\sqrt{\log n}}<\infty.
$$
This is obvious, since 
$$
\sum_{n=2^{2^{m-1}}}^{2^{2^{m}}}  \frac{1}{n\sqrt{\log n}} \asymp
\int_{2^{2^{m-1}}}^{2^{2^{m}}}  \frac{{\rm d}t}{t\sqrt{\log t}} \asymp 2^{m/2}.
$$
\qed

For general domains we may repeat the above argument to obtain the following result generalizing
the inverse approximation theorems due to Dolzhenko \cite{Dol} (see \eqref{tig}). 
Here we need to assume that $f\in H^\infty(G)$.

\begin{theorem}
\label{jkl}
Let $G$ be a bounded domain in $\mathbb{C}$  with a rectifiable boundary, $f\in H^\infty(G)$.
If 
$$
\sum_{n=1}^\infty \frac{R_n(f, H^\infty(G))}{n\sqrt{\log n}} <\infty,
$$ 
then $f'\in A^1(G)$. If $1 < p\le 2$  and  
$$
\sum_{n=1}^\infty \frac{R_n(f, H^\infty(G))}{n^{1/p}} <\infty,
$$ 
then $f'\in A^p(G)$. Moreover, $R_n(f, H^\infty(G))$ can be replaced by a smaller quantity
$V_n(f, H^\infty(G))$.
\end{theorem}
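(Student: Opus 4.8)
The plan is to rerun the dyadic telescoping of the proof of Theorem~\ref{tinv} with Theorem~\ref{th1} replaced by the forward estimates of Theorem~\ref{aaa}, adjusting the dyadic grid to the growth of the relevant Bernstein factor. I fix near-best rational approximants $f_n\in\mathcal R_n(G)$ with $\|f-f_n\|_{H^\infty(G)}\le 2R_n(f,H^\infty(G))$; the hypothesis forces $R_n\to0$, so $f_n\to f$ uniformly on $\overline G$ and hence $f_n'\to f'$ locally uniformly, giving $\|f'\|_{A^p(G)}\le\liminf_n\|f_n'\|_{A^p(G)}$ by Fatou. Writing $u_m=f_{N_m}-f_{N_{m-1}}$ for a grid $(N_m)$ to be chosen, it then suffices to sum $\sum_m\|u_m'\|_{A^p(G)}$, and monotonicity of $R_n$ gives $\|u_m\|_{H^\infty(G)}\le 4R_{N_{m-1}}$.

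For the $A^1$ conclusion I take the doubly exponential grid $N_m=2^{2^m}$. Then $u_m$ is rational of degree at most $2N_m$, so \eqref{domp1} yields $\|u_m'\|_{A^1(G)}\lesssim\sqrt{\log(N_m+1)}\,\|u_m\|_{H^\infty(G)}\lesssim 2^{m/2}R_{2^{2^{m-1}}}$, and the block comparison $\sum_{2^{2^{m-1}}\le n<2^{2^m}}(n\sqrt{\log n})^{-1}\asymp 2^{m/2}$ shows that $\sum_m 2^{m/2}R_{2^{2^{m-1}}}<\infty$ is equivalent to the stated $\sum_n R_n/(n\sqrt{\log n})<\infty$. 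For the $A^p$ conclusion with $1<p\le2$ the Bernstein factor from \eqref{domp} is only polynomial, $\|u_m'\|_{A^p(G)}\lesssim N_m^{1-1/p}\|u_m\|_{H^\infty(G)}$, so the doubly exponential grid is wasteful and I instead use $N_m=2^m$, obtaining $\|u_m'\|_{A^p(G)}\lesssim 2^{m(1-1/p)}R_{2^{m-1}}$; here the comparison $\sum_{2^{m-1}\le n<2^m}n^{-1/p}\asymp 2^{m(1-1/p)}$ (valid since $p>1$) identifies summability with $\sum_n R_n/n^{1/p}<\infty$. In both cases adding the single term $\|f_{N_0}'\|_{A^p(G)}$ finishes the rational versions.

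The main obstacle is the final clause, replacing $R_n$ by $V_n$, i.e.\ using $n$-valent approximants $g_n$. The telescoping now produces $u_m=g_{N_m}-g_{N_{m-1}}$, a difference of mean-valent functions, and \emph{such a difference is in general not mean $O(N_m)$-valent}: a difference of two univalent functions can already be mean $N$-valent for arbitrary $N$ (take $g_1(z)=z$ and $g_2(z)=z-\eps T_N(z)$ with $T_N$ a Chebyshev polynomial and $\eps$ small enough that $\re g_2'>0$ on $\D$, whence $u=g_1-g_2=\eps T_N$ is mean $N$-valent by scale invariance of the mean valence). Thus one may not feed $u_m$ into Theorem~\ref{aaa}, and the whole telescoping must avoid doing so.

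The way around this is to re-run the proof of Theorem~\ref{th1} at the level of $u_m$ rather than to apply it as a black box. One splits $\|u_m'\|_{A^p(G)}$ into the part over $\{\dist(z,\partial G)\gtrsim N_m^{-K}\}$, which the valence-free Littlewood--Paley inequalities control by $\|u_m\|_{H^\infty(G)}$ alone (Lemma~\ref{lem1} with $g\equiv1$ for $p=1$, and $\int|u_m'|^p(1-|z|)^{p-1}\,{\rm d}\mathcal A\lesssim\|u_m\|_{H^p}^p$ for $1<p\le2$) and thus reproduce the bounds above, and the part over a thin collar near $\partial G$, which one majorizes by the collar norms of $g_{N_m}'$ and $g_{N_{m-1}}'$ and estimates by Hayman's inequality (Lemma~\ref{lem2}, and its $A^p$ analogue from the proof of Theorem~\ref{th2}) applied to each mean-valent approximant \emph{separately}. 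Taking the cutoff exponent $K$ large in the $A^1$ case makes the collar term negligible with room to spare, while in the $A^p$ case $K$ must be taken at the critical value $K=1$ so that the polynomial growth of the interior term and the collar decay balance. Carrying this split out in $G$ (through the conformal transport of Theorem~\ref{th4} when $G$ is simply connected, or directly from the proof of Theorem~\ref{aaa} in general) is the one point needing genuine care; once it is in place the $V_n$ form follows, $V_n\le R_n$ only weakening the hypothesis.
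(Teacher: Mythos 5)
For the two $R_n$ clauses your argument coincides with the paper's: the same telescoping over near-best approximants $f_{N_m}$, the doubly exponential grid with \eqref{domp1} for the $A^1$ conclusion, the single exponential grid with \eqref{domp} for the $A^p$ conclusion, and the same dyadic block comparisons. The paper records this proof only as ``repeat the argument of Theorem~\ref{tinv}, with $f_{2^m}$ in place of $f_{2^{2^m}}$ when $1<p\le 2$'', so your write-up of this part is in fact more explicit than the original and is correct.

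Your discussion of the $V_n$ clause is the substantive point, and your diagnosis is right: the clause does not follow by feeding $u_m=g_{N_m}-g_{N_{m-1}}$ into Theorem~\ref{aaa}, since a difference of two univalent functions can have mean valence $N$ for arbitrary $N$ (your Chebyshev example is correct, univalence of $z-\eps T_N$ following from $\re(1-\eps T_N')>0$); the paper asserts this clause without addressing the issue. Your repair does work for the $A^1$ clause: the interior part of $\|u_m'\|_{A^1}$ is handled by the valence-free Lemma~\ref{lem1}, and over a collar of depth $N_m^{-K}$ Hayman's estimate applied to each $g_{N_j}$ \emph{separately} contributes $O\bigl(N_m^{1/2-K(1-1/p)}\|f\|_{H^\infty}\bigr)$, summable over the doubly exponential grid even though the small factor $V_{N_{m-1}}$ has been lost. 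But precisely because that factor is lost, the same device fails for $1<p\le 2$: at your ``critical value'' $K=1$ the collar contribution to $\|u_m'\|_{A^p}$, estimated through the two approximants separately, is of order $N_m^{1-1/p}\|f\|_{H^\infty}$ with no small factor, and $\sum_m 2^{m(1-1/p)}$ diverges; taking $K>1$ instead inflates the interior term to $N_m^{K(1-1/p)}V_{N_{m-1}}$, which demands a hypothesis strictly stronger than $\sum_n V_n/n^{1/p}<\infty$. The collar estimate you need for $p>1$ is the elementary, valence-free one applied to $u_m$ itself: $|u_m'(w)|\le \|u_m\|_{H^\infty(G)}/\dist(w,\partial G)$, which over $\{\dist(w,\partial G)<1/N_m\}$ gives $\lesssim N_m^{p-1}V_{N_{m-1}}^p$ because rectifiability of $\partial G$ yields $|\{w\in G:\dist(w,\partial G)<t\}|\lesssim t$; this retains the small factor and closes that case. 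Finally, be aware that for a general (possibly multiply connected) $G$ the interior/collar decomposition and the Hayman step live inside the proof of Theorem~\ref{aaa} in \cite{BaKa1}, which is not reproduced in this paper, so that portion of your argument remains a sketch rather than a proof.
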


In the case $1<p\le 2$ one should consider approximants $f_{2^m}$ in place of $f_{2^{2^m}}$. Here 
the novelty is in a larger class of domains without any regularity except the rectifiable boundary.
\medskip

It remains to show the sharpness of the condition \eqref{et}. We will use the following simple
(and apparently well-known) lemma.

\begin{lemma}
\label{lac}
If  $f(z) = \sum_{k\ge 1} a_k z^{2^k} \in A^1(\D)$, then $\sum_{k\ge 1} 2^{-k} |a_k| <\infty$.
In particular, if $f'\in A^1(\D)$, then $\sum_{k\ge 1} |a_k| <\infty$.
\end{lemma}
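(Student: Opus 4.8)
The plan is to reduce everything to the classical fact that on each circle a Hadamard-lacunary series has comparable $L^1$ and $L^2$ norms, and then to integrate in the radial variable, using dyadic annuli to isolate one coefficient at a time.

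First, for fixed $r\in(0,1)$ I would view the boundary values $t\mapsto f(re^{it}) = \sum_{k\ge 1}(a_k r^{2^k})e^{i2^k t}$ as a lacunary trigonometric series with lacunarity ratio $2$. By the classical Zygmund inequality for lacunary series (the direction $\|\cdot\|_{L^1(\T)}\gtrsim\|\cdot\|_{L^2(\T)}$), whose constant depends only on the lacunarity ratio and not on the coefficients, one gets
$$
\int_0^{2\pi}|f(re^{it})|\,{\rm d}t \gtrsim \Big(\sum_{k\ge 1}|a_k|^2\, r^{2\cdot 2^k}\Big)^{1/2},
$$
and this holds with a constant that is uniform in $r$, since the frequencies $2^k$ (and hence the ratio $2$) are unchanged by the factors $r^{2^k}$.

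Next I would integrate this against $r\,{\rm d}r$ over $(0,1)$. To extract the $j$-th coefficient I restrict the integral to the annulus $A_j=\{\,1-2^{-j}\le r\le 1-2^{-j-1}\,\}$, which has radial width $\asymp 2^{-j}$ and satisfies $r\asymp 1$ throughout. On $A_j$ I keep only the term $k=j$ inside the square root and use the uniform lower bound $(1-2^{-j})^{2^j}\gtrsim 1$ (coming from $(1-2^{-j})^{2^j}\to e^{-1}$, so it is bounded below by an absolute constant for all $j\ge 1$), which gives $\int_{A_j}(\cdots)^{1/2}\,r\,{\rm d}r\gtrsim 2^{-j}|a_j|$. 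Since the annuli $A_j$ are pairwise disjoint, summing over $j$ yields
$$
\sum_{j\ge 1} 2^{-j}|a_j| \;\lesssim\; \int_\D |f(z)|\,{\rm d}\mathcal{A}(z) \;<\;\infty,
$$
which is the first assertion of the lemma.

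For the ``in particular'' clause I would apply the first assertion to $g(z)=z f'(z)=\sum_{k\ge 1} 2^k a_k z^{2^k}$, which is again a lacunary series with exponents $2^k$. Since $|g(z)|\le|f'(z)|$ on $\D$, we have $g\in A^1(\D)$ whenever $f'\in A^1(\D)$, and applying the first part to $g$ (whose $k$-th coefficient is $2^k a_k$) gives $\sum_{k\ge 1} 2^{-k}\,|2^k a_k| = \sum_{k\ge 1}|a_k|<\infty$. The only genuinely nontrivial ingredient is the lacunary $L^1$--$L^2$ comparison on circles; the rest is bookkeeping with the dyadic annuli. The main point to verify with care is precisely that Zygmund's constant is uniform in $r$, so that the radial integration after the circle estimate is legitimate.
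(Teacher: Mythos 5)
Your proof is correct, and its skeleton --- dyadic annuli $1-2^{-j}\le r\le 1-2^{-j-1}$, the uniform lower bound $(1-2^{-j})^{2^j}\ge 1/4$, integration in $r$, and summation over the disjoint annuli --- is exactly the paper's. The one genuine difference is the ingredient you use to bound $\int_0^{2\pi}|f(re^{it})|\,{\rm d}t$ from below: you invoke Zygmund's lacunary $L^1$--$L^2$ comparison (correctly noting that its constant depends only on the lacunarity ratio, hence is uniform in $r$) and then immediately discard all terms of the square function except $k=j$. At that point your estimate has collapsed to $\int_0^{2\pi}|f(re^{it})|\,{\rm d}t\gtrsim |a_j|r^{2^j}$, which is nothing but the trivial bound on a single Fourier coefficient,
$$
|a_j|r^{2^j}=\frac{1}{2\pi}\Bigl|\int_0^{2\pi}f(re^{i\phi})e^{-i2^j\phi}\,{\rm d}\phi\Bigr|\le \frac{1}{2\pi}\int_0^{2\pi}|f(re^{i\phi})|\,{\rm d}\phi,
$$
valid for \emph{any} $f\in L^1$ with no lacunarity whatsoever; this is what the paper uses directly. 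So the lacunary inequality is correct but superfluous machinery here --- its only cost is that your argument appears to need the gap condition when in fact the first assertion does not. Your handling of the ``in particular'' clause via $g(z)=zf'(z)$, with $|g|\le|f'|$ on $\D$ and $k$-th coefficient $2^ka_k$, is a clean way to make explicit what the paper leaves implicit.
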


\begin{proof}
Obviously, for $1-2^{-k} \le r \le 1-2^{-k-1}$, we have
$$
|a_k|r^{2^k} =\bigg|\int_0^{2\pi} f(re^{i\phi}) e^{-i 2^k \phi} {\rm d}\phi\bigg| \le 
\int_0^{2\pi} |f(re^{i\phi})| {\rm d}\phi,
$$
whence, by integration,  $2^{-k} |a_k| \lesssim \int_{1-2^{-k} \le |z| \le 1-2^{-k-1}} |f(z)|{\rm d}\mathcal{A}(z)$.
\end{proof}

\begin{proof}[Proof of Proposition \ref{tinw}]
Without loss of generality we may assume that $\phi$ is nonincreasing (otherwise replace $\phi$
by $\tilde \phi(x) = \sup_{t\ge x} \phi(t)$). Then we can construct a nonincreasing positive function $\psi$
such that 
$$
\sum_{k\ge 1} \frac{\psi(k)}{k}=\infty, \qquad
\sum_{k\ge 1} \frac{\psi(k)\phi(2^k)}{k}<\infty,
$$
and, following the construction of Littlewood \cite{Lit}, put
$$
f(z)=\sum_{j=1}^\infty \frac{\psi(j)}{j} z^{2^j}.
$$
Obviously, $f\in H^2$, and, moreover, $f\in H^p$, $1\le p<\infty$,
since, by the classical Paley--Kahane--Khintchine inequality,
$\|g\|_{H^p} \asymp \|g\|_{H^2}$ for any lacunary series $g$
(see \cite[Theorem 2.23]{Pav} or \cite[Ch. 5, Th. 8.20]{Zyg}).
Also, $f'\notin A^1(\D)$ by Lemma \ref{lac}. 

Now let us estimate the best polynomial approximation $E_n(f, H^p)$. Obviously,
for $2^k \le n <2^{k+1} -1$ we have
$$
E_n^2(f, H^2)  \le \bigg\| \sum_{j=k+1}^\infty \frac{\psi(j)}{j} z^{2^j} \bigg\|^2_{H^2} 
= \sum_{j=k+1}^\infty \frac{(\psi(j))^2}{j^2} \lesssim \frac{(\psi(k))^2}{k}. 
$$
By the Paley--Kahane--Khintchine inequality, for $1<p<\infty$,
$$
E_n(f, H^p) \lesssim \bigg\| \sum_{j=k+1}^\infty \frac{\psi(j)}{j} z^{2^j}  \bigg\|_{H^p} 
\lesssim \frac{\psi(k)}{\sqrt{k}},
$$
and so 
$$
\begin{aligned}
\sum_{n=2}^\infty \frac{E_n(f, H^p)}{n\sqrt{\log n}} \phi(n) & = 
\sum_{k=1}^\infty  \sum_{n=2^k}^{2^{k+1} -1} \frac{E_n(f, H^p)}{n\sqrt{\log n}} \,\phi(n) \\
& \lesssim
\sum_{k=1}^\infty 2^k\cdot \frac{\psi(k)}{\sqrt{k}}\cdot \frac{1}{2^k \sqrt{k}} \,\phi(2^k) =
\sum_{k=1}^\infty \frac{\psi(k)  \phi(2^k)}{k}  <\infty.
\end{aligned}
$$
\end{proof}

\end{document}